\documentclass[10pt]{article}
\usepackage{mathrsfs}
\usepackage{mathrsfs}
\usepackage{mathrsfs}
\usepackage{cite}
\usepackage{amsfonts}
\usepackage{amssymb}
\usepackage{amsmath}
\usepackage{amsthm}
\usepackage{dsfont}
\usepackage[colorlinks,
            linkcolor=black,
            anchorcolor=black,
            citecolor=black
            ]{hyperref}
\theoremstyle{plain}
\newtheorem{thm}{Theorem}[section]

\newtheorem{lem}[thm]{Lemma}

\theoremstyle{definition}
\newtheorem{defn}{Definition}[section]

\theoremstyle{remark}

\allowdisplaybreaks[4]

\usepackage[pagewise]{lineno}
\begin{document}
\title{{\Large\bf {
Second order necessary conditions for quantum stochastic optimal control problems}}
\thanks{This work is supported by National Natural Science Foundation of China (No.11471189, No.11871308).}}
\author{{\normalsize  Penghui Wang, \ Shan Wang{\thanks{E-mail addresses: phwang@sdu.edu.cn(P.Wang), swangmath@hebtu.edu.cn(S.Wang).}}  \,\,\,\,  } \\
{\normalsize School of Mathematics, Shandong University,} {\normalsize Jinan, 250100, China}\\
{\normalsize School of Mathematics Sciences, Hebei Normal University,}{ \normalsize Shijiazhuang, 050016, China}}
\date{}
\maketitle
\begin{minipage}{14cm} {\bf Abstract }
This paper aims to establish second order necessary conditions for optimal control in quantum stochastic systems. We employ a variational approach, analogous to methods in classical stochastic control, to analyze systems governed by quantum stochastic differential equations driven by fermionic Brownian motion, where the control enters both the drift and diffusion terms. 
This result provides a theoretical foundation for further exploration of optimization problems and their practical applications in the field of quantum stochastic control.
\\
\noindent{\bf 2020 Mathematics  Subject Classification.}  46L51, 60H10, 81S25.

\noindent{\bf Keywords.} Second order necessary conditions, quantum optimal control, backward quantum stochastic differential equations.
\end{minipage}
 \maketitle
\numberwithin{equation}{section}
\newtheorem{theorem}{Theorem}[section]
\newtheorem{lemma}[theorem]{Lemma}
\newtheorem{proposition}[theorem]{Proposition}
\newtheorem{corollary}[theorem]{Corollary}
\newtheorem{remark}[theorem]{Remark}


\section{Introduction}\label{Intro}
\indent\indent
Let $(\Lambda(\mathscr{H}), \mathscr{C}, m)$ be a quantum (noncommutative) probability space \cite{B.S.W.1,B.S.W.2,P.book,P.X,W.F,W.W-1,W.W-2}. The Hilbert space $\Lambda(\mathscr{H})$ is the anti-symmetric Fock space over the separable complex Hilbert space $\mathscr{H}=L^2(\mathbb R^+)$. The fermion field $\Psi(z)$ is defined on $\Lambda(\mathscr{H})$ by
\begin{equation*}
\Psi(z):=C(z)+A(Jz),\ z\in \mathscr{H},
\end{equation*}
where $C(z)$ and $A(z)$ are the creation operator and the annihilation operator, respectively. Additionally, $J$ is the complex conjugation operator on $L^2(\mathbb{R}^+,ds)$ (i.e., $J f=\overline{f}$). 
For any $t\in[0,\infty)$, $\mathscr{C}_t$ is a W*-subalgebra generated by $$\{\Psi(v): v\in L^2(\mathbb{R}^+,ds)\ \textrm{and}\ \textrm{ess supp}\ v\subset [0,t]\},$$ and $\mathscr{C}$ is von Neumann algebra generated by the increasing sequence of von Neumann subalgebras $\{\mathscr{C}_t\}_{t\in\mathbb{R}^+}$.
For any $t\in[0,\infty)$,  the fermion Brownian motion $W(t)$ is defined by
\begin{equation}\label{Fermion Brownian motion}
W(t):=\Psi(\chi_{[0,t]})=\mathscr{A}^*(\chi_{[0,t]})+\mathscr{A}(J\chi_{[0,t]}), 
\end{equation}
which adapted to the family $\{\mathscr{C}_t: t\in \mathbb{R}^+\}$ \cite{B.S.W.1,B.S.W.2,P.X}. For the Fock vacauum $\mathds{1}\in\Lambda(\mathscr{H})$, define a state on $\mathscr{C}$ by 
\begin{equation}\label{m-state}
m(\cdot):=\langle\mathds{1}, \cdot\mathds{1}\rangle.
\end{equation}
Clearly, $m$ is  normal and  faithful. For any $ p\in[1,\infty)$, the noncommutative $L^p$-norm on $\mathscr{C}$ is defined by
 $$\|f\|_{p}:=m(|f|^p)^{\frac{1}{p}}=\langle\mathds{1}, |f|^p\mathds{1}\rangle^{\frac{1}{p}},$$
where $|f|=(f^*f)^{\frac{1}{2}}$. The noncommutative $L^p$-space $L^p(\mathscr{C},m)$ is the completion of $(\mathscr{C}, \|\cdot\|_p)$,  abbreviated as $L^p(\mathscr{C})$.

Let $\mathcal{X}$ be a Banach space. For any given $T>0$, denoted by $C([0,T];\mathcal{X})$ the Banach space of all $\mathcal{X}$-valued continuous functions on $[0,T]$. For any $r\in[1,\infty)$,
let $L^r(0,T;\mathcal{X})$ be the Banach space of all  $\mathcal{X}$-valued functions that are $r$th power Lebesgue integrable on $[0,T]$.
In addition, let
\begin{equation*}
C_\mathbb{A}([0,T];L^p(\mathscr{C})):=\left\{f:f\in C([0,T];L^p(\mathscr{C}));\ f(t)\in L^p(\mathscr{C}_t),\  \textrm{a.e.}\ t\in [0,T]\right\},
\end{equation*}
and
\begin{equation*}
L^r_\mathbb{A}(0,T;L^p(\mathscr{C})):=\left\{f:f\in L^r(0,T;L^p(\mathscr{C}));\ f(t)\in L^p(\mathscr{C}_t),\  \textrm{a.e.}\ t\in [0,T]\right\}.
\end{equation*}
Let $\mathcal{X}_1$, $\mathcal{X}_2$ and $\mathcal{X}_3$ be three Banach spaces. Denote by $\mathcal{L}(\mathcal{X}_1;\mathcal{X}_2)$ the Banach space of all bounded linear operators from $\mathcal{X}_1$ to $\mathcal{X}_2$  with the usual operator norm, and denote $\mathcal{L}(\mathcal{X}_1;\mathcal{X}_1)$ simply as $\mathcal{L}(\mathcal{X}_1)$. Denote by $\mathcal{L}(\mathcal{X}_1,\mathcal{X}_2;\mathcal{X}_3)$ the Banach space of all bounded linear operators from $\mathcal{X}_1\times \mathcal{X}_2$ to $\mathcal{X}_3$  with the usual operator norm.

Let $\mathcal{H}$ be a separable Hilbert space with norm $\|\cdot\|_{\mathcal{H}}$ and inner product $\langle\cdot,\cdot\rangle_{\mathcal{H}}$. 
Assume that $U$ is a closed convex subset of $\mathcal{H}$, and  put
\begin{equation*}
  \mathcal{U}^\beta[0,T]:=\left\{u:[0,T]\to U\hspace{0.5mm}|\hspace{1mm} \|u\|_{ L^\beta(0,T;\mathcal{H})}<\infty\right\}.
\end{equation*}
This paper investigates the second order necessary condition for optimal control of the following quantum stochastic control system in noncommutative space $L^2(\mathscr{C})$:
\begin{equation}\label{FSDE-control}
 \left\{
 \begin{aligned}
  &dx(t)=D(t,x(t),u(t))dt+F(t,x(t),u(t))dW(t)+dW(t)G(t,x(t),u(t)),\quad \textrm{in}\ [t_0,T],\\
  & x(t_0)=x_0,
  \end{aligned}
  \right.
\end{equation}
with the cost functional
\begin{equation}\label{Cost functional introduced}
  \mathcal{J}(u(\cdot))=\int_{t_0}^TL(t,x(t),u(t))dt+g(x(T)), \quad u(\cdot)\in \mathcal{U}^\beta[t_0,T].
\end{equation}
Here the initial condition $x_0\in L^2(\mathscr{C}_{t_0})$, the maps $D(\cdot,\cdot,\cdot), F(\cdot,\cdot,\cdot), G(\cdot,\cdot,\cdot):[t_0,T]\times L^2(\mathscr{C})\times U \to L^2(\mathscr{C})$ are adapted, continuous operator-valued functions, $W(t)$ is fermion Brownian motion defined by \eqref{Fermion Brownian motion} of quantum (noncommutative) probability space $(\Lambda(\mathscr{H}), \mathscr{C}, m)$, the cost density
$L(\cdot,\cdot,\cdot):[t_0,T]\times L^2(\mathscr{C})\times U \to \mathbb{R}$, and the terminal cost $g(\cdot): L^2(\mathscr{C}_T)\to\mathbb{R}$.

The optimal control problem for the quantum stochastic system \eqref{FSDE-control} is stated as follows:\\
\textbf{Problem (QOP).} Find a $\bar{u}(\cdot)\in \mathcal{U}^\beta[t_0,T]$ such that
\begin{equation}\label{QOP}
\mathcal{J}(\bar{u}(\cdot))=\inf_{u(\cdot)\in\mathcal{U}^\beta[t_0,T]}\mathcal{J}(u(\cdot)).
\end{equation}
Any $\bar{u}(\cdot)$ satisfying \eqref{QOP} is called an \textit{optimal control}. The corresponding $\bar{x}(\cdot)$ and $(\bar{x}(\cdot), \bar{u}(\cdot))$ are called an \textit{optimal state}  and \textit{optimal pair} of quantum control systems, respectively.

Optimal control problems are inherently a class of optimization problems with specific structures, for which Pontryagin’s maximum principle provides first-order necessary conditions. However, these first-order conditions only capture essential characteristics that an optimal solution must satisfy; they neither guarantee optimality nor effectively distinguish between local and global optima. Therefore, establishing corresponding second-order necessary conditions becomes an indispensable theoretical step in order to accurately identify true optimal solutions from candidate solutions that satisfy the first-order conditions.
In deterministic systems, research on second-order conditions can be traced back to the early development of Pontryagin’s principle \cite{B.J,G.}, and the corresponding theoretical framework is now well-established \cite{W.,F.L-2021,F.T}. In contrast, the development of second-order conditions in stochastic optimal control has encountered significantly greater challenges. Fundamental difficulties arise especially in cases with non-convex control domains and control-dependent diffusion terms, such as the loss of orders when estimating the variational terms of stochastic integral.
Research in this direction started relatively late. Although early studies explored certain special cases, a systematic theoretical breakthrough did not emerge until the late 1990s. In 1997,  Mammadov and Bashirov made a key advance in \cite{M.B} by establishing rigorous second-order necessary conditions for systems with uncontrolled diffusion terms for the first time, thereby laying an important foundation for subsequent research. Since then, the theory has evolved rapidly along several branches. For instance, Tang \cite{T} established a second-order necessary condition for singular optimal controls within the framework of Pontryagin's maximum principle, under the assumption that the diffusion term is independent of the control variable.  
Zhang and Zhang \cite{Z.Z-2015} first employed Malliavin calculus to established the pointwise second-order maximum principle for singular optimal controls in the classical sense with the convex control constraint. And then,  L\"{u}, Zhang and Zhang \cite{L.Z.Z} addressed the second optimality condition for optimal control problems of stochastic evolution equations in infinite dimension. Further details and extensions of these results can be found in the related works \cite{B.S,F.L-2020,F.Z.Z-2017,F.Z.Z-2018,Z.Z-2017,Z.Z-2018}.

Inspired by above research, we investigate the second order necessary conditions for optimal control problems of quantum stochastic system driven by fermion Brownian motion in this paper, which is an important class of physical models used to describe quantum optical systems \cite{G.Z}.
In \cite{W.W-1}, we have obtained the Pontryagin-type maximum principle for \textbf{Problem (QOP)}, which is a first order necessary condition for quantum optimal controls. 
However, due to the influence of quantum noise, the control problems of infinite-dimensional quantum stochastic systems become complex under conditions of nonlinearity, constraints, or high uncertainty, potentially leading to singular control problems. Therefore, when solving such problems, it is essential to consider the second order necessary conditions for optimal control to ensure the stability and optimality of the control strategy. 
 
To achieve the desired result, we have undertaken the following efforts.
First, based on the isometric property of stochastic integrals in non-commutative spaces, we derive the necessary estimates. 
Next, we introduce the parity operator $\Upsilon$ to address the non-commutativity between operators, which enables us to construct the required variational equations and adjoint equations. Additionally, we propose higher differentiability conditions for the cost functional, allowing us to obtain the corresponding Taylor expansion \cite{Z.Z-2015,Z.Z,Z.Z-2017,Z.Z-2018}.
Finally, using the estimates from the variational equations, we obtain the main results by combining the Fermion It\^{o}'s formula and the relaxed transposition solutions to the second order adjoint equations. 

The rest of this paper is organized as follows. In Section \ref{Preliminary}, we provide some useful
estimates corresponding to the control system and present some results for quantum stochastic differential equations. In Section \ref{main}, we  establish the integral-type second order necessary optimality conditions for quantum stochastic optimal controls.

\section{Preliminaries}\label{Preliminary}
\indent\indent
In this section, we provide some  preliminaries and necessary assumptions which would be useful in the sequel. 
For the following stochastic control systems studied in this paper 
\begin{equation*}
 \left\{
 \begin{aligned}
  &dx(t)=D(t,x(t),u(t))dt+F(t,x(t),u(t))dW(t)+dW(t)G(t,x(t),u(t)),\quad \textrm{in}\ [t_0,T],\\
  & x(t_0)=x_0,
  \end{aligned}
  \right.
\end{equation*}
we then present the corresponding assumptions as follows:
\begin{description}
  \item[(H1)]  The maps $D(\cdot,\cdot,\cdot), F(\cdot,\cdot,\cdot), G(\cdot,\cdot,\cdot):[t_0,T]\times L^2(\mathscr{C})\times U\to L^2(\mathscr{C})$ are adapted. For $\Phi=D,F,G$, there exists a constant $\mathcal{C}>0$ such that for any 
      $x, \hat{x}\in L^2(\mathscr{C})$ and $u, \hat{u}\in U$,
\begin{equation*}
\left\{
\begin{aligned}
&\|\Phi(t,x,u)-\Phi(t,\hat{x},\hat{u})\|_2\leq \mathcal{C}(\|x-\hat{x}\|_2+\|u-\hat{u}\|_{\mathcal{H}}),\\
&\|\Phi(t,0,0)\|_2\leq \mathcal{C}.
\end{aligned}
\right.
\end{equation*}
  \item[(H2)] The maps $L:[t_0,T]\times L^2(\mathscr{C})\times U\to \mathbb{R}$ and $g:L^2(\mathscr{C}_T)\to \mathbb{R}$ are measurable, and there exists a constant $\mathcal{C}>0$ 
   such that for any $t\in[t_0, T]$, $x, \hat{x}\in L^2(\mathscr{C})$ and $u,\hat{u}\in U$
      \begin{equation*}
\left\{
\begin{aligned}
&|L(t,x,u)-L(t,\hat{x},\hat{u})|\leq \mathcal{C}(\|x-\hat{x}\|_2+\|u-\hat{u}\|_{\mathcal{H}}),\\
&|g(x)-g(\hat{x})|\leq \mathcal{C}\|x-\hat{x}\|_2,\\
&|L(t,0,u)|+|g(0)|\leq \mathcal{C}.
\end{aligned}
\right.
\end{equation*}
 \item[(H3)] The maps $D,\ F,\  G$  are second order Fréchet differentiable with respect to $x$ and $u$. For $\Phi=D,F,G$, the operators $\Phi_x(t,x,u)\in \mathcal{L}(L^{2}(\mathscr{C}))$, $\Phi_u(t,x,u)\in\mathcal{L}(\mathcal{H}; L^{2}(\mathscr{C}))$, $\Phi_{xu}(t,x,u)\in \mathcal{L}(L^2(\mathscr{C}),\mathcal{H};L^2(\mathscr{C}))$, $\Phi_{xx}(t,x,u)\in \mathcal{L}(L^2(\mathscr{C}),L^2(\mathscr{C});L^2(\mathscr{C}))$ and $\Phi_{uu}(t,x,u)\in \mathcal{L}(\mathcal{H},\mathcal{H};$ $L^2(\mathscr{C}))$ are continuous, and there exists a constant $\mathcal{C}$ such that, for any $(t,x,u)\in [t_0,T]\times L^2(\mathscr{C})\times U$,
     \begin{equation*}
     \left\{
     \begin{aligned}
        & \|\Phi_x(t,x,u)\|_{\mathcal{L}(L^{2}(\mathscr{C}))}+\|\Phi_u(t,x,u)\|_{\mathcal{L}(\mathcal{H};L^{2}(\mathscr{C}))}\leq \mathcal{C}(1+\|x\|_2+\|u\|_\mathcal{H}), \\
       &\|\Phi_{xx}(t,x,u)\|_{\mathcal{L}(L^2(\mathscr{C}),L^2(\mathscr{C}); L^{2}(\mathscr{C}))}+ \|\Phi_{uu}(t,x,u)\|_{\mathcal{L}(\mathcal{H},\mathcal{H}; L^{2}(\mathscr{C}))}\\
      &\qquad\qquad\qquad\qquad\qquad\qquad +\|\Phi_{xu}(t,x,u)\|_{\mathcal{L}(L^{2}(\mathscr{C}),\mathcal{H}; L^{2}(\mathscr{C}))}\leq \mathcal{C}.
     \end{aligned}
     \right.
     \end{equation*}
  \item[(H4)] For any $t\in[t_0,T]$, the map  $L(t,\cdot,\cdot):L^2(\mathscr{C})\times U\to\mathbb{R}$  is second order Fréchet differentiable with respect to $x$ and $u$, and $g(\cdot):L^2(\mathscr{C}_T)\to\mathbb{R}$ is second order Fréchet differentiable  with respect to $x$. Moreover,  there exists a constant $\mathcal{C}$ such that for a.e. $t\in[t_0,T]$ and any $(x,u)\in L^2(\mathscr{C})\times U$,
       \begin{equation*}
        \left\{
        \begin{aligned}
        &\|L_{x}(t,x,u)\|_2+\|g_{x}(x)\|_{2}+\|L_{u}(t,x,u)\|_\mathcal{H}\leq \mathcal{C}(1+\|x\|_2+\|u\|_\mathcal{H})\\
&\|L_{xx}(t,x,u)\|_{\mathcal{L}(L^2(\mathscr{C}))}+\|g_{xx}(t,x,u)\|_{\mathcal{L}(L^{2}(\mathscr{C}))}\leq \mathcal{C},\\
 & \|L_{xu}(t,x,u)\|_{\mathcal{L}(L^2(\mathscr{C});\mathcal{H})}+\|L_{uu}(t,x,u)\|_{\mathcal{L}(\mathcal{H})}\leq \mathcal{C}.
        \end{aligned}
        \right.
      \end{equation*}
\end{description}

Recall that for any $p\in[1,\infty)$, the grading automorphism $\Upsilon$ on $L^p(\mathscr{C})$, as discussed in \cite{P.X}, is uniquely determined by 
\begin{equation*} 
\Upsilon(\Psi(v_1)\Psi(v_2)\cdots \Psi(v_n)) = (-1)^n \Psi(v_1)\Psi(v_2)\cdots \Psi(v_n), \quad v_i \in \mathscr{H}, \quad 1 \leq i \leq n. 
\end{equation*} 
Furthermore, $\Upsilon$ is referred to as the parity operator. It is evident that $\Upsilon$ is unitary and satisfies $\Upsilon^2=I$.

\begin{defn} \cite{P.X,B.S.W.1,B.S.W.2}
 An element $f \in L^p(\mathscr{C})$ is said to be even (resp.
odd) if $\Upsilon(f) = f$ (resp. $\Upsilon(f)=-f$).
\end{defn}

Then, one obtains the following result, which is derived from \cite{B.S.W.2}.
\begin{lem}\textnormal{\cite[Theorem 2.1]{B.S.W.2}}\label{estimate of x}
Assuming that $\textbf{(H1)}$ holds, the equation \eqref{FSDE-control} admits a unique solution $x(\cdot)\in C_\mathbb{A}([t_0,T];L^2(\mathscr{C}))$, and it holds that
\begin{equation*}
\|x(\cdot)\|_{C_\mathbb{A}([t_0,T];L^2(\mathscr{C}))}\leq \mathcal{C}\left\{1+\|x_0\|_2+\|u\|_{L^2(t_0,T;\mathcal{H})}\right\}.
\end{equation*}
\end{lem}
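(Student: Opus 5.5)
We prove the lemma by a Picard iteration (contraction) argument in $C_\mathbb{A}([t_0,T];L^2(\mathscr{C}))$, followed by a Gronwall-type a priori estimate. The basic tool is the isometry of the fermion stochastic integral. For adapted $f\in L^2_\mathbb{A}(t_0,T;L^2(\mathscr{C}))$,
\begin{equation*}
\Big\|\int_{t_0}^t f(s)\,dW(s)\Big\|_2^2=\int_{t_0}^t\|f(s)\|_2^2\,ds,\qquad \Big\|\int_{t_0}^t dW(s)\,f(s)\Big\|_2^2=\int_{t_0}^t\|f(s)\|_2^2\,ds .
\end{equation*}
This is the noncommutative It\^o isometry of \cite{B.S.W.1,B.S.W.2}. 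For the left integral one can also write $dW(s)f(s)=\Upsilon(f(s))dW(s)$, so it reduces to the right integral, and $\Upsilon$ is unitary. The maps $t\mapsto\int_{t_0}^t f\,dW$ are continuous and adapted.

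\emph{Step 1 (solution map).} For $y\in C_\mathbb{A}([t_0,T];L^2(\mathscr{C}))$ define
\begin{equation*}
(\mathcal{T}y)(t)=x_0+\int_{t_0}^tD(s,y(s),u(s))ds+\int_{t_0}^tF(s,y(s),u(s))dW(s)+\int_{t_0}^tdW(s)G(s,y(s),u(s)).
\end{equation*}
By \textbf{(H1)}, $\|\Phi(s,y(s),u(s))\|_2\le \mathcal{C}(1+\|y(s)\|_2+\|u(s)\|_\mathcal{H})$. Since $u\in L^2(t_0,T;\mathcal{H})$, each integrand lies in $L^2_\mathbb{A}$. Adaptedness of $\Phi$ gives $\Phi(s,y(s),u(s))\in L^2(\mathscr{C}_s)$, so $\mathcal{T}y$ is adapted and continuous, i.e.\ $\mathcal{T}y\in C_\mathbb{A}$.

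\emph{Step 2 (contraction).} Let $y,\hat y\in C_\mathbb{A}$. Using the isometry, Cauchy--Schwarz on the $ds$-integral, and the Lipschitz bound in \textbf{(H1)}, we get
\begin{equation*}
\|(\mathcal{T}y)(t)-(\mathcal{T}\hat y)(t)\|_2^2\le 3\mathcal{C}^2\,(T-t_0+2)\int_{t_0}^t\|y(s)-\hat y(s)\|_2^2\,ds .
\end{equation*}
Iterating this bound, the $n$-th iterate $\mathcal{T}^n$ satisfies a bound with constant $K^n(t-t_0)^n/n!$. Hence $\mathcal{T}^n$ is a contraction for large $n$, which gives existence and uniqueness of a fixed point. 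Alternatively, one can use the weighted norm $\sup_t e^{-\lambda t}\|\cdot\|_2$ with $\lambda$ large.

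\emph{Step 3 (estimate).} For the fixed point $x$, the same computation with the growth bound gives
\begin{equation*}
\|x(t)\|_2^2\le 4\|x_0\|_2^2+K\int_{t_0}^t\big(1+\|x(s)\|_2^2+\|u(s)\|_\mathcal{H}^2\big)ds .
\end{equation*}
Gronwall's inequality then yields $\sup_t\|x(t)\|_2^2\le \mathcal{C}(1+\|x_0\|_2^2+\|u\|^2_{L^2(t_0,T;\mathcal{H})})$. Taking square roots gives the stated bound.

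\emph{Main obstacle.} The only genuinely noncommutative point is justifying the isometry and continuity for the left integral $\int dW\,G$, where $dW$ multiplies from the left. I would handle it with the grading relation $W$-increment$\cdot f=\Upsilon(f)\cdot W$-increment, valid for adapted $f$ by the canonical anticommutation relations. This turns the left integral into a right integral of $\Upsilon(G)$, and the isometry for that is the result in \cite{B.S.W.1,B.S.W.2}. Everything else is the classical Lipschitz SDE argument.
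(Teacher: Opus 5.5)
Your proof is correct and takes essentially the standard route. The paper gives no proof of its own here, only the citation \cite[Theorem 2.1]{B.S.W.2}, which concerns Lipschitz It\^o--Clifford equations; your argument is the usual successive-approximation proof with the isometry and Gronwall, the same computation the paper carries out in Lemma \ref{estimate}. Your rewriting $dW\,f=\Upsilon(f)\,dW$ is the identity behind the paper's $F_x+\Upsilon G_x$, and your Step 3 makes explicit how $\|u\|_{L^2(t_0,T;\mathcal{H})}$ enters through the growth bound, which the citation leaves implicit.
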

 Assume that $\bar{u}(\cdot)\in \mathcal{U}^4[0,T]$ is an optimal control and $\bar{x}(\cdot)$ is corresponding optimal state. 
 Let us introduce some notations. For any $\Phi=D,F,G,L$,  write
\begin{equation*}
  \left\{
\begin{aligned}
&\Phi_x(t):=\Phi_x(t,\bar{x}(t), \bar{u}(t)),\\
&\Phi_u(t):=\Phi_u(t,\bar{x}(t), \bar{u}(t)), \\
&\Phi_{xx}(t):=\Phi_{xx}(t,\bar{x}(t), \bar{u}(t)),\\
&\Phi_{uu}(t):=\Phi_{uu}(t,\bar{x}(t), \bar{u}(t)),\\
&\Phi_{xu}(t):=\Phi_{xu}(t,\bar{x}(t), \bar{u}(t)). 
\end{aligned}
  \right.
\end{equation*}
To establish the second order necessary condition for \textbf{Problem (QOP)}, 
 we introduce the following adjoint equations:
\begin{equation}\label{BQSDE-Y}
\left\{
\begin{aligned}
dy(t)=&-\{D_x(t)^* y(t)+(F_x(t)+\Upsilon G_x(t))^*Y(t)-L_x(t)\}dt+Y(t)dW(t),\  {\rm{in}}\ [0,T),\\
y(T)=&-g_x(\overline{x}(T)),
\end{aligned}
\right.
\end{equation}
and 
\begin{equation}\label{BQSDE-P}
\left\{
\begin{aligned}
dP(t)=&-\Big\{D_{x}(t)^*P(t)+P(t)D_{x}(t)+(F_x(t)+\Upsilon G_x(t))^*Q(t)\Upsilon+Q(t)\Upsilon(F_x(t)+\Upsilon G_x(t))\\
&\hspace{8mm}+(F_x(t)+\Upsilon G_x(t))^*P(t)(F_x(t)+\Upsilon G_x(t))+\mathbb{H}_{xx}(t)\Big\}dt+Q(t)dW(t),\ {\rm{in}}\ [0,T),\\
P(T)=&-g_{xx}(\overline{x}(T)),
\end{aligned}
\right.
\end{equation}
where $y(T)\in L^2(\mathscr{C}_T)$, $P(T)\in \mathcal{L}(L^2(\mathscr{C}_T))$. The Hamilton function $\mathbb{H}(\cdot,\cdot,\cdot,\cdot,\cdot)$ is defined by
\begin{align*}
\mathbb{H}(t,x,u,y,Y)&:= \langle y, D(t,x,u) \rangle+\langle Y, F(t,x,u)+\Upsilon G(t,x,u) \rangle-L(t,x,u),\\
 &(t,x,u,y,Y)\in [0,T]\times L^2(\mathscr{C})\times U \times  L^2(\mathscr{C})\times  L^2(\mathscr{C}),
\end{align*}
and simplify the notations
\begin{equation*}
\left\{
  \begin{aligned}
  &\mathbb{H}_x(t):=\mathbb{H}_x(t,x,u,y,Y), \\
  &\mathbb{H}_u(t):=\mathbb{H}_u(t,x,u,y,Y), \\
  & \mathbb{H}_{xx}(t):=\mathbb{H}_{xx}(t,x,u,y,Y), \\
  &  \mathbb{H}_{uu}(t):=\mathbb{H}_{uu}(t,x,u,y,Y), \\
  &  \mathbb{H}_{xu}(t):=\mathbb{H}_{xu}(t,x,u,y,Y).
\end{aligned}
\right.
\end{equation*}

The following result related to the equation \eqref{BQSDE-Y} is required and is found in \cite{W.W-1}. 
\begin{lem}\textnormal{\cite[Theorem 4.1]{W.W-1}}
Under the assumptions \textbf{(H3)} and \textbf{(H4)}, the equation \eqref{BQSDE-Y} admits a unique solution $(y(\cdot),Y(\cdot))\in C_\mathbb{A}([0,T];L^2(\mathscr{C}))\times  L^2_\mathbb{A}(0,T;L^2(\mathscr{C}))$, and
\begin{equation}
\|(y(\cdot),Y(\cdot))\|_{C_\mathbb{A}([0,T];L^2(\mathscr{C}))\times  L^2_\mathbb{A}(0,T;L^2(\mathscr{C}))}\leq \mathcal{C}\left(\|g_x(\overline{x}(T))\|_2+\int_{0}^{T}\|L_x(t)\|_2dt\right).
\end{equation}
\end{lem}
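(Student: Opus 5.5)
The lemma asserts well-posedness of the linear backward equation \eqref{BQSDE-Y} together with the stated a priori bound. I would prove it by a contraction-mapping argument built on the martingale representation theorem for the fermion Brownian motion $W$, in the spirit of \cite{B.S.W.2,W.W-1}.

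\textbf{Step 1: zero driver.} First take a driver $f\in L^1_\mathbb{A}(0,T;L^2(\mathscr{C}))$ not depending on $(y,Y)$, and a terminal value $\xi\in L^2(\mathscr{C}_T)$. Form the $L^2$-martingale $M(t)=m(\xi+\int_0^T f(s)ds\,|\,\mathscr{C}_t)$, where $m(\cdot|\mathscr{C}_t)$ is the conditional expectation onto $\mathscr{C}_t$. The fermionic representation theorem gives a unique $Y\in L^2_\mathbb{A}(0,T;L^2(\mathscr{C}))$ with $M(t)=M(0)+\int_0^tY(s)dW(s)$. Then set $y(t)=M(t)-\int_0^tf(s)ds$. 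This pair solves $dy=-f\,dt+Y\,dW$ with $y(T)=\xi$. The isometry $\|\int_t^T Y\,dW\|_2^2=\int_t^T\|Y\|_2^2ds$ together with contractivity of conditional expectation yields
\[
\sup_t\|y(t)\|_2+\|Y\|_{L^2_\mathbb{A}}\le \mathcal{C}\Big(\|\xi\|_2+\int_0^T\|f\|_2\,ds\Big).
\]

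\textbf{Step 2: linear driver.} Now put $f=D_x^*y+(F_x+\Upsilon G_x)^*Y-L_x$ and define the solution map $(y,Y)\mapsto(\tilde y,\tilde Y)$ via Step 1. Three facts are needed:
\begin{itemize}
\item $\Upsilon$ is unitary, and it preserves adaptedness because it fixes each $\mathscr{C}_t$.
\item The coefficients $D_x^*$ and $(F_x+\Upsilon G_x)^*$ are bounded operators by \textbf{(H3)}.
\item Lemma \ref{estimate of x} makes $\|\bar x(t)\|_2$ bounded, and $\bar u\in L^4\subset L^2$, so $\|D_x(t)\|$ and related norms are in $L^2(0,T)$.
\end{itemize}
Put a weighted norm $\sup_t e^{\lambda\int_t^T k}\|y(t)\|_2^2+\int_0^Te^{\lambda\int_t^Tk}\|Y\|_2^2dt$ on $C_\mathbb{A}\times L^2_\mathbb{A}$, with $k$ built from these coefficient norms. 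Applying the fermion Itô formula to $\|\tilde y(t)\|_2^2$ on $[t,T]$ and using Young's inequality, a large $\lambda$ makes the map a contraction. Equivalently, one can iterate over short subintervals and then patch them together. The fixed point is the unique solution.

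\textbf{Step 3: a priori bound.} Apply the same Itô/Gronwall estimate to the solution itself with $\xi=-g_x(\bar x(T))$. This gives the stated bound in terms of $\|g_x(\bar x(T))\|_2+\int_0^T\|L_x\|_2dt$.

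\textbf{Main obstacle.} The hardest point is the Itô computation of $\|y(t)\|_2^2=m(y^*y)$ in the noncommutative setting. There I must check that the cross terms $m(y^*Y\,dW)$ have vanishing expectation and that the quadratic term is exactly $m(Y^*Y)dt$. This relies on the isometry property for fermionic stochastic integrals, and on adaptedness interacting with the parity, since $dW$ anticommutes with odd elements. That interaction is where $\Upsilon$ enters the coefficient. I would use the $L^2$ isometry of \cite{B.S.W.1} directly to handle it.
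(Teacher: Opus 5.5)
The paper gives no argument for this lemma; it imports it from \cite[Theorem 4.1]{W.W-1}. Your route is sound in outline: Clifford martingale representation for a given driver, then a contraction in a weighted norm or on short subintervals, with the bound coming from the isometry. This is the standard argument for linear backward equations of this type. However, one step is not justified, and it does not follow from \textbf{(H1)}--\textbf{(H4)} as written: adaptedness of the driver.

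Step 1 needs $f\in L^1_\mathbb{A}(0,T;L^2(\mathscr{C}))$, but in Step 2 you only check that $\Upsilon$ preserves $L^2(\mathscr{C}_t)$. The operators $D_x(t)^*$, $F_x(t)^*$, $G_x(t)^*$ are Hilbert-space adjoints on all of $L^2(\mathscr{C})$. An operator that maps $L^2(\mathscr{C}_t)$ into itself can have an adjoint that does not. For example, if $D_x(t)x=\langle e,x\rangle a$ with $a\in L^2(\mathscr{C}_t)$ and $e\in L^2(\mathscr{C}_T)\setminus L^2(\mathscr{C}_t)$, then $D_x(t)^*y=\langle a,y\rangle e$. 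Likewise $L_x(t)$ is the gradient of a functional on all of $L^2(\mathscr{C})$, so it need not lie in $L^2(\mathscr{C}_t)$.

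When the driver is not adapted, $M(t)-\int_0^tf\,ds$ is not adapted, so your solution map does not act on $C_\mathbb{A}([0,T];L^2(\mathscr{C}))\times L^2_\mathbb{A}(0,T;L^2(\mathscr{C}))$. Worse, $\int_s^t f\,dr=y(s)-y(t)+\int_s^tY\,dW$ would have to lie in $L^2(\mathscr{C}_t)$ for every $s<t$. So the equation taken literally then has no adapted solution at all; take $D_x=F_x=G_x=0$ and a non-adapted $L_x$.

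To close the gap you have two options:
\begin{itemize}
\item State as an explicit hypothesis (implicit in the cited result) that these adjoints leave each $L^2(\mathscr{C}_t)$ invariant and that $L_x(t)\in L^2(\mathscr{C}_t)$.
\item Replace the driver by its conditional expectation $m(f(t)\,|\,\mathscr{C}_t)$. This keeps the driver Lipschitz in $(y,Y)$, because the conditional expectation is the orthogonal projection onto $L^2(\mathscr{C}_t)$. It also leaves unchanged the later duality pairings with the adapted processes $x_1,x_2$ in Step 4 of the main theorem.
\end{itemize}
With either fix in place, your contraction argument goes through.

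Separately, your ``main obstacle'' is not really one. Since $y(t)\in L^2(\mathscr{C}_t)$ and $\int_t^TY\,dW$ is orthogonal to $L^2(\mathscr{C}_t)$, the identity $y(t)+\int_t^TY\,dW=\xi+\int_t^Tf\,ds$ together with the isometry gives $\|y(t)\|_2^2+\int_t^T\|Y(s)\|_2^2\,ds=\|\xi+\int_t^Tf\,ds\|_2^2$ directly, with no fermion It\^o formula. Also, $\Upsilon$ plays no role in the well-posedness itself. It enters the coefficient through $dW\,G=\Upsilon(G)\,dW$ in the forward equation, which matters only for that duality.
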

To provide the definition of the solution to the equation \eqref{BQSDE-P}, let us introduce the following quantum stochastic differential equations (for short QSDEs):
\begin{equation}\label{qsde1 to second adjoint equation}
  \left\{
  \begin{aligned}
  &d\phi_1(s)=\{D_{x}(s)\phi_1(s)+\mu_1(s)\}ds+\{(F_x(s)+\Upsilon G_x(s))\phi_1(s)+\nu_1(s)\}dW(s),\ \textrm{in}\ [t,T],\\
  &\phi_1(t)=\zeta_1,
  \end{aligned}
  \right.
\end{equation}
and 
\begin{equation}\label{qsde2 to second adjoint equation}
  \left\{
  \begin{aligned}
  &d\phi_2(s)=\{D_{x}(s)\phi_2(s)+\mu_2(s)\}dt+\{(F_x(s)+\Upsilon G_x(s))\phi_2(s)+\nu_2(s)\}dW(s),\ \textrm{in}\ [t,T],\\
  &\phi_2(t)=\zeta_2,
  \end{aligned}
  \right.
\end{equation}
where $\zeta_1,\zeta_2\in L^2(\mathscr{C}_{t})$ and $\mu_1,\mu_2(\cdot),\nu_2(\cdot),\nu_2(\cdot)\in L^2_\mathbb{A}(t,T;L^2(\mathscr{C}))$.
Write
$$\mathfrak{H}_t:=L^2(\mathscr{C}_{t})\times L^2_\mathbb{A}(t,T;L^2(\mathscr{C}))\times L^2_\mathbb{A}(t,T;L^2(\mathscr{C})), \quad t\in[0,T],$$
and
\begin{equation*}
 \mathcal{Q}[0,T]:=\Big\{\left(Q^{(\cdot)},\widehat{Q}^{(\cdot)}\right);\ Q^{(t)}, \widehat{Q}^{(t)}\in\mathcal{L}(\mathfrak{H}_t;L^2_\mathbb{A}(t,T;L^2(\mathscr{C}))), \text{and}\ Q^{(t)}(0,0,\cdot)^*=\widehat{Q}^{(t)}(0,0,\cdot),\ t\in[0,T]\Big\}.
\end{equation*}
\begin{defn}\cite[Definition 1.1]{W.W-2}\label{definition of the relaxed tran solution}
We call $\left(P(\cdot),Q^{(\cdot)},\widehat{Q}^{(\cdot)}\right) \in C_\mathbb{A}([0,T];\mathcal{L}(L^2(\mathscr{C})))\times \mathcal{Q}[0,T]$ a relaxed transposition solution to \eqref{BQSDE-P} if for any $t\in[0,T]$, $\zeta_1,\zeta_2\in L^2(\mathscr{C}_t)$ and $\mu_1 $, $\mu_2 $, $\nu_2 $, $\nu_2 \in L^2_\mathbb{A}([t,T];L^2(\mathscr{C}))$, it holds that
\begin{equation*}
  \begin{aligned}
  &\langle P_T\phi_2(T),\phi_1(T)\rangle+\int_{t}^{T}\langle \mathbb{H}_{xx}(s)\phi_2(s),\phi_1(s)\rangle ds\\
  &=\langle P(t)\zeta_2,\zeta_1\rangle+\int_{t}^{T}\langle P(s)\mu_2(s),\phi_1(s) \rangle  ds+\int_{t}^{T}\langle P(s)\phi_2(s), \mu_1(s)\rangle  ds\\
  &\hspace{4.7mm}+\int_{t}^{T}\langle P(s)(F_x(s)+\Upsilon G_x(s))\phi_2(s), \nu_1(s)\rangle dt+\int_{t}^{T}\langle P(s)\nu_2(s), (F_x(s)+\Upsilon G_x(s))\phi_1(s)+\nu_1(s)\rangle ds\\
  &\hspace{4.7mm}+\int_{t}^{T}\langle Q^{(t)}(\zeta_2,\mu_2,\nu_2)(s),\nu_1(s)\rangle ds+\int_{t}^{T} \langle\nu_2(s), \widehat{Q}^{(t)}(\zeta_1,\mu_1,\nu_1)(s) \rangle ds.
  \end{aligned}
\end{equation*}
\end{defn}

\section{Main result}\label{main}
\indent\indent
This section is dedicated to deriving the integral-type second order necessary condition for \textbf{Problem (QOP)} using the variational method.

Let $u(\cdot)\in \mathcal{U}^4[0,T]$  be an admissible control that corresponds to the state $x(\cdot)$. Set
\begin{equation}\label{delta-x or delta-u}
\delta u(\cdot)=u(\cdot)-\bar{u}(\cdot)\quad \textrm{and}\quad\delta x(\cdot)=x(\cdot)-\bar{x}(\cdot).
 \end{equation}
For any admissible control $u(\cdot)\in \mathcal{U}[t_0,T]$, let $x_1(\cdot)$ and $x_2(\cdot)$ be the corresponding solution to the following equations:
\begin{equation}\label{the first variation equation}
\left\{
  \begin{aligned}
    dx_1(t)=&\{D_x(t)x_1(t)+ D_u(t)\delta u(t)\}dt+\{F_x(t)x_1(t)+ F_u(t)\delta u(t)\}dW(t)\\
     &\hspace{0.5mm} +dW(t)\{G_x(t)x_1(t)+ G_u(t)\delta u(t)\},\ \textrm{in } [t_0,T], \\
   x_1(t_0)= & 0,
  \end{aligned}
  \right.
\end{equation}
and
\begin{equation}\label{Second chafen equation}
\left\{
\begin{aligned}
dx_2(t)=&\hspace{0.6mm}\{D_x(t)x_2+D_{xx}(t)(x_1,x_1)+2D_{xu}(t)(x_1,\delta u)+D_{uu}(t)(\delta u,\delta u)\}dt\\
&+\{F_x(t)x_2+F_{xx}(t)(x_1,x_1)+2F_{xu}(t)(x_1,\delta u)+F_{uu}(t)(\delta u,\delta u)\}dW(t)\\
&+dW(t)\{G_x(t)x_2+G_{xx}(t)(x_1,x_1)+2G_{xu}(t)(x_1,\delta u)+G_{uu}(t)(\delta u,\delta u)\},\ \textrm{in}\ [t_0,T],\\
x_2(t_0)=&0.
\end{aligned}
\right.
\end{equation}

Combining with It\^{o}-isometry of Clifford stochastic integral in noncommutative space $L^2(\mathscr{C})$,  we have the following auxiliary result.
\begin{lem}\label{estimate}
Let assumptions $\textbf{(H1)}$ and $\textbf{(H3)}$ hold. Then 
\begin{equation}\label{estimate of all QSDEs}
\left\{
 \begin{aligned}
 &\|\delta x(\cdot)\|_{C_\mathbb{A}([t_0,T];L^2(\mathscr{C}))}\leq \mathcal{C}\|\delta u\|_{L^2(t_0,T;\mathcal{H})},\\ 
& \|x_1(\cdot)\|_{C_\mathbb{A}([t_0,T];L^2(\mathscr{C}))}\leq \mathcal{C}\|\delta u\|_{L^2(t_0,T;\mathcal{H})},\\
 & \|x_2(\cdot)\|_{C_\mathbb{A}([t_0,T];L^2(\mathscr{C}))}\leq \mathcal{C}\|\delta u\|^2_{L^4(t_0,T;\mathcal{H})},\\
 &  \|\delta x(\cdot)-x_1(\cdot)\|_{C_\mathbb{A}([t_0,T];L^2(\mathscr{C}))}\leq \mathcal{C}\|\delta u\|^2_{L^4(t_0,T;\mathcal{H})}.
 \end{aligned}
 \right.
\end{equation}
\end{lem}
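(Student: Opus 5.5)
The plan is the usual Gronwall argument built on the It\^{o}-isometry for Clifford stochastic integrals in $L^2(\mathscr{C})$. For adapted $f,h\in L^2_\mathbb{A}(t_0,T;L^2(\mathscr{C}))$ this isometry gives
\[
\Big\|\int_{t_0}^t f\,dW+\int_{t_0}^t dW\,h\Big\|_2^2\le 2\int_{t_0}^t(\|f(s)\|_2^2+\|h(s)\|_2^2)\,ds .
\]
Combined with Cauchy--Schwarz on the $ds$-term, any solution $z$ of a linear QSDE
\[
dz=\{A z+a\}dt+\{Bz+b\}dW+dW\{Cz+c\},\qquad z(t_0)=0,
\]
with $A,B,C$ bounded operators (uniformly in $t$), satisfies
\[
\|z(t)\|_2^2\le \mathcal{C}\int_{t_0}^t\|z(s)\|_2^2ds+\mathcal{C}\int_{t_0}^T(\|a\|_2^2+\|b\|_2^2+\|c\|_2^2)ds .
\]
Gronwall then gives $\sup_t\|z(t)\|_2^2\le\mathcal{C}\int_{t_0}^T(\|a\|_2^2+\|b\|_2^2+\|c\|_2^2)ds$. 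The uniform boundedness of $\Phi_x(t)$ along $(\bar x,\bar u)$ follows from \textbf{(H1)}: Lipschitz continuity in $x$ with constant $\mathcal{C}$ forces $\|\Phi_x\|\le\mathcal{C}$. Continuity in $t$ is part of Lemma \ref{estimate of x}.

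\textbf{First two estimates.} For $\delta x$, subtract the two state equations and apply the Lipschitz bound \textbf{(H1)} directly. The forcing terms are then $\le\mathcal{C}(\|\delta x\|_2+\|\delta u\|_\mathcal{H})$, and Gronwall gives $\|\delta x\|_{C}\le\mathcal{C}\|\delta u\|_{L^2}$. For $x_1$, apply the linear estimate with $a=D_u\delta u$, $b=F_u\delta u$, $c=G_u\delta u$. Here $\|\Phi_u\|\le\mathcal{C}$ again by \textbf{(H1)}.

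\textbf{Third estimate.} For $x_2$, the forcing terms are $\Phi_{xx}(x_1,x_1)+2\Phi_{xu}(x_1,\delta u)+\Phi_{uu}(\delta u,\delta u)$. By \textbf{(H3)} these are bounded by $\mathcal{C}(\|x_1\|_2^2+\|x_1\|_2\|\delta u\|_\mathcal{H}+\|\delta u\|_\mathcal{H}^2)$. Squaring and integrating, and using $\sup\|x_1\|_2\le\mathcal{C}\|\delta u\|_{L^2}\le\mathcal{C}\|\delta u\|_{L^4}$ on the finite interval, gives
\[
\int\|\cdot\|_2^2\le \mathcal{C}\Big(\|\delta u\|_{L^4}^4+\|\delta u\|_{L^4}^2\int\|\delta u\|^2+\int\|\delta u\|^4\Big)\le\mathcal{C}\|\delta u\|_{L^4}^4 .
\]
Taking square roots yields the third estimate.

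\textbf{Fourth estimate (main obstacle).} This is a genuine second-order Taylor remainder. Set $r=\delta x-x_1$. Then
\[
dr=\{D_x r+R_D\}dt+\{F_x r+R_F\}dW+dW\{G_x r+R_G\},
\]
where
\[
R_\Phi=\Phi(t,x,u)-\Phi(t,\bar x,\bar u)-\Phi_x(t)\delta x-\Phi_u(t)\delta u .
\]
By Taylor's formula with integral remainder, which is allowed since $\Phi$ is $C^2$ with bounded second derivatives by \textbf{(H3)},
\[
R_\Phi=\int_0^1(1-\theta)\,\Phi''(t,\bar x+\theta\delta x,\bar u+\theta\delta u)\big((\delta x,\delta u),(\delta x,\delta u)\big)\,d\theta .
\]
Note that convexity of $U$ is used here. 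Consequently $\|R_\Phi\|_2\le\mathcal{C}(\|\delta x\|_2^2+\|\delta u\|_\mathcal{H}^2)$. The linear estimate then gives
\[
\sup\|r\|_2^2\le\mathcal{C}\int(\|\delta x\|_2^4+\|\delta u\|^4)ds .
\]
Using the first estimate, $\sup\|\delta x\|_2^4\le\mathcal{C}\|\delta u\|_{L^2}^4\le\mathcal{C}\|\delta u\|_{L^4}^4$, and the bound follows.

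The delicate point is that the remainder is quadratic and must be controlled in $L^2(\mathscr{C})$ of a noncommutative space. Since $\Phi''$ is bounded as a bilinear map into $L^2(\mathscr{C})$ by \textbf{(H3)}, only $\|\delta x\|_2^2$ appears and no products of operators need to be estimated. I expect no loss of order, because the $dW$-terms contribute only through the isometry. If the bilinear bound were only available at the base point, I would instead split $R_\Phi$ into the second-order term at $(\bar x,\bar u)$ plus a continuity remainder, but the uniform bound in \textbf{(H3)} should make this unnecessary.
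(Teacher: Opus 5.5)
Your proposal is correct and follows essentially the paper's route: the It\^{o} isometry plus Gronwall for each of the four estimates, with the $x_2$ bound handled identically. The only variation is in the last estimate. You take $\Phi_x(t)$ as the linear coefficient and bound the integral-form second-order remainder $R_\Phi$ by $\mathcal{C}(\|\delta x\|_2^2+\|\delta u\|_{\mathcal{H}}^2)$. The paper instead takes the averaged derivatives $\widetilde{\Phi}_x,\widetilde{\Phi}_u$ as coefficients and controls $(\widetilde{\Phi}_x-\Phi_x)x_1+(\widetilde{\Phi}_u-\Phi_u)\delta u$ through the Lipschitz continuity of the first derivatives. Both rest on the same bounded-second-derivative fact. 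Your appeal to \textbf{(H1)} for the uniform bound on $\Phi_x,\Phi_u$ is cleaner than the paper's appeal to \textbf{(H3)}, since the latter only gives a bound depending on $\|u(t)\|_{\mathcal{H}}$.
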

\begin{proof}
First, we prove the estimate for $\delta x$,  $x_1$ and $x_2$.

For any $\Phi=D,F,G$, and $t\in[t_0,T]$, put
\begin{equation*}
  \left\{
  \begin{aligned}
  \widetilde{\Phi}_{x}(t)&:=\int_0^1\Psi_{x}(t, \bar{x}(t)+\theta \delta x(t),u(t))d\theta,\\
  \widetilde{\Phi}_{u}(t)&:=\int_0^1\Psi_{u}(t, \bar{x}(t), \bar{u}(t)+\theta\delta u(t))d\theta.
  \end{aligned}
  \right.
\end{equation*}
Then, by assumption $\textbf{(H3)}$ and Lemma \ref{estimate of x},  there exists a constant $\mathcal{C}$ such that, for any $t\in[t_0,T]$,
\begin{equation}\label{Psi-x}
\begin{aligned}
\left\| \widetilde{\Phi}_{x}(t)\right\|_{\mathcal{L}(L^2(\mathscr{C}))} &=\left\|\int_0^1\Phi_{x}(t, \bar{x}(t)+\theta \delta x (t),u(t))d\theta\right\|_{\mathcal{L}(L^2(\mathscr{C}))}\\
&\leq\int_0^1\|\Phi_{x}(t, \bar{x}(t)+\theta \delta x (t),u(t))\|_{\mathcal{L}(L^2(\mathscr{C}))}d\theta\\
 &\leq\mathcal{C}.
\end{aligned}
\end{equation}
Similarly, it holds that
\begin{equation}\label{Psi-u}
\left\| \widetilde{\Phi}_{u}(t)\right\|_{\mathcal{L}(\mathcal{H};L^2(\mathscr{C}))}\leq \mathcal{C},\quad t\in[t_0,T].
\end{equation}
By \eqref{delta-x or delta-u}, it is clear that $\delta x$ satisfies the following QSDE:
\begin{equation*}
\left\{
\begin{aligned}
&d\delta x(t)=\left\{\widetilde{D}_x(t)\delta x(t)+\widetilde{D}_u(t)\delta u(t)\right\}dt+\left\{\widetilde{F}_x(t)\delta x(t)+\widetilde{F}_u(t)\delta u(t)\right\}dW(t)\\
&\qquad\qquad+dW(t)\left\{\widetilde{G}_x(t)\delta x(t)+\widetilde{G}_u(t)\delta u(t)\right\},\quad {\rm{in}}\ [{t_0},T],\\
&\delta x({t_0})=0.
\end{aligned}
\right.
\end{equation*}
Then, by  Minkowski inequality, It\^{o}-isometry of Clifford stochastic integral \cite[Theorem 3.15]{B.S.W.1} and \eqref{Psi-x}-\eqref{Psi-u}, we find that, for any $t\in[t_0,T]$,
\begin{align*}
  \|\delta x(t)\|_2^2=&\left\|\int_{t_0}^{t}\left\{\widetilde{D}_x(s)\delta x(s)+\widetilde{D}_u(s)\delta u(s)\right\}ds+\int_{t_0}^{t}\left\{\widetilde{F}_x(s)\delta x(s)+\widetilde{F}_u(s)\delta u(s)\right\}dW(s)\right.\\
   &\hspace{1mm}\left.+\int_{t_0}^{t}dW(s)\left\{\widetilde{G}_x(s)\delta x(s)+\widetilde{G}_u(s)\delta u(s)\right\}\right\|_2^2\\
   \leq &\mathcal{C}\left\{\left\|\int_{t_0}^{t}\left\{\widetilde{D}_x(s)\delta x(s)+\widetilde{D}_u(s)\delta u(s)\right\}ds\right\|_2^2+\int_{t_0}^{t}\left\|\widetilde{F}_x(s)\delta x(s)+\widetilde{F}_u(s)\delta u(s)\right\|_2^2ds\right.\\
   &\hspace{5mm}\left.+ \int_{t_0}^{t}\left\|\widetilde{G}_x(s)\delta x(s)+\widetilde{G}_u(s)\delta u(s)\right\|_2^2 ds\right\}\\
\leq & \mathcal{C}\left\{\int_{t_0}^{t}\left\|\delta x(s)\right\|_2^2 ds+\int_{t_0}^{t}\|\delta u(s)\|_\mathcal{H}^2 ds\right\}.
\end{align*}
By Gronwall's inequality, we obtain that
\begin{equation}\label{the estimate of delta-x}
 \sup_{t\in[t_0,T]}\|\delta x(t)\|_2\leq \mathcal{C}\|\delta u\|_{L^2(t_0,T;\mathcal{H})}.
\end{equation}

From \eqref{the first variation equation}, we have that
\begin{equation*}
\begin{aligned}
 x_1(t)=&\int_{t_0}^t\{D_x(s)x_1(s)+ D_u(s)\delta u(s)\}ds+\int_{t_0}^t\{F_x(s)x_1(s)+ F_u(s)\delta u(s)\}dW(s)\\
  &+\int_{t_0}^tdW(s)\{G_x(s)x_1(s)+ G_u(s)\delta u(s)\}, \ t\in[t_0,T].
  \end{aligned}
\end{equation*}
Similar to the above calculation, one gets that
\begin{equation}\label{the estimate of x_1}
\sup_{t\in[t_0,T]}\|x_1(t)\|_2\leq \mathcal{C}\left(\int_{t_0}^T\|\delta u(s)\|^2_\mathcal{H}ds\right)^{\frac{1}{2}}.
\end{equation}

Next, we prove that $$\|x_2(\cdot)\|_{C_\mathbb{A}([t_0,T];L^2(\mathscr{C}))}\leq \mathcal{C}\|\delta u\|^2_{L^4(t_0,T;\mathcal{H})}.$$ 
By \eqref{Second chafen equation}, \eqref{the estimate of x_1} and  assumption $\textbf{(H3)}$, one obtains that
  \begin{align*}
 &\|x_2(t)\|_2^2\\
  &\leq\mathcal{C}\left\{\left\|\int_{t_0}^t\{D_x(s)x_2(s)+D_{xx}(s)(x_1(s),x_1(s))+2D_{xu}(s)(x_1(s),\delta u(s))\}ds\right\|_2^2\right. \\
  &\quad\indent+\left\|\int_{t_0}^tD_{uu}(s)(\delta u(s),\delta u(s))ds\right\|_2^2+\left\|\int_{t_0}^{t}F_{uu}(s)(\delta u(s),\delta u(s))dW(s)\right\|_2^2\\
  &\quad\indent+\left\|\int_{t_0}^t\{F_x(s)x_2(s)+F_{xx}(s)(x_1(s),x_1(s))+2F_{xu}(s)(x_1(s),\delta u(s))\}dW(s)\right\|_2^2 \\
 &\quad\indent+\left\|\int_{t_0}^t dW(s)\{G_x(s)x_2(s)+G_{xx}(s)(x_1(s),x_1(s))+2G_{xu}(s)(x_1(s),\delta u(s))\}\right\|_2^2\\
 &\quad\indent\left.+\left\|\int_{t_0}^t dW(s)G_{uu}(s)(\delta u(s),\delta u(s))\right\|_2^2\right\} \\
 &\leq\mathcal{C}\left\{ \int_{t_0}^t\|x_2(s)\|_2^2ds+\int_{t_0}^t\|x_1(s)\|_2^4ds+\int_{t_0}^t\|x_1(s)\|_2^2\|\delta u(s)\|^2_{\mathcal{H}}ds+\int_{t_0}^t\|\delta u(s)\|_\mathcal{H}^4ds\right\}\\
 &\leq\mathcal{C}\left\{ \int_{t_0}^t\|x_2(s)\|_2^2ds+\sup_{t\in[t_0,T]}\|x_1(t)\|_2^4+\int_{t_0}^t\|\delta u(s)\|^2_{\mathcal{H}}ds\sup_{t\in[t_0,T]}\|x_1(t)\|_2^2+\int_{t_0}^t\|\delta u(s)\|_\mathcal{H}^4ds\right\}\\
 &\leq\mathcal{C}\left\{ \int_{t_0}^t\|x_2(s)\|_2^2ds+\int_{t_0}^t\|\delta u(s)\|_\mathcal{H}^4ds\right\},\ t\in[t_0,T].
\end{align*}
By  Gronwall's inequality again, we have that
\begin{equation}\label{the estimate of x_2}
\sup_{t\in[t_0,T]}\|x_2(t)\|_2\leq \mathcal{C}\|\delta u\|_{L^4(t_0,T;\mathcal{H})}^2.
\end{equation}

Finally, we prove that 
$$ \|\delta x(\cdot)-x_1(\cdot)\|_{C_\mathbb{A}([t_0,T];L^2(\mathscr{C}))}\leq \mathcal{C}\|\delta u\|^2_{L^4(t_0,T;\mathcal{H})}.$$ 
Let $\xi(\cdot):=\delta x(\cdot)-x_1(\cdot)$. Then $\xi(\cdot)$ satisfies the following QSDE:
\begin{equation*}
\left\{
 \begin{aligned}
 d\xi(t)=&\left\{\widetilde{D}_x(t)\xi(t)+\left(\widetilde{D}_x(t)-D_x(t)\right)x_1(t)+\left(\widetilde{D}_u(t)-D_u(t)\right)\delta u(t)\right\}dt\\
 &+\left\{\widetilde{F}_x(t)\xi(t)+\left(\widetilde{F}_x(t)-F_x(t)\right)x_1(t)+\left(\widetilde{F}_u(t)-F_u(t)\right)\delta u(t)\right\}dW(t)\\
 &+dW(t)\left\{\widetilde{G}_x(t)\xi(t)+\left(\widetilde{G}_x(t)-G_x(t)\right)x_1(t)+\left(\widetilde{G}_u(t)-G_u(t)\right)\delta u(t)\right\},\ {\rm{in}}\ [t_0,T],\\
 \xi(t_0)=&0.
 \end{aligned}
 \right.
\end{equation*}
By assumption $\textbf{(H3)}$,  for any $t\in[t_0,T]$, it follows that 
\begin{equation*}
  \begin{aligned}
  &\|\widetilde{\Phi}_x(t)-\Phi_x(t)\|_{\mathcal{L}(L^2(\mathscr{C}))}\\
  &\hspace{7mm}=\left\|\int_0^1\Phi_{x}(t, \bar{x}(t)+\theta \delta x(t),u(t))d\theta -\Phi_x(t)\right\|_{\mathcal{L}(L^2(\mathscr{C}))} \\
  &\hspace{7mm} \leq\int_0^1\left\|\Phi_{x}(t, \bar{x}(t)+\theta \delta x(t),u(t))-\Phi_x(t)\right\|_{\mathcal{L}(L^2(\mathscr{C}))} d\theta \\
  &\hspace{7mm}\leq \mathcal{C}(\| \delta x (t)\|_2+\| \delta u(t)\|_{\mathcal{H}}).
\end{aligned}
\end{equation*}
Similarly, 
\begin{equation*}
\left\|\widetilde{\Phi}_u(t)-\Phi_u(t)\right\|_{\mathcal{L}(\mathcal{H};L^2(\mathscr{C}))}\leq \mathcal{C}\|\delta u(t)\|_{\mathcal{H}},\ t\in[t_0,T].
\end{equation*}
Hence, we can show from \eqref{the estimate of delta-x}, \eqref{the estimate of x_1}, and \eqref{the estimate of x_2} that
\begin{align*}
  & \|\xi(t)\|_2^2\\
  &\leq\mathcal{C}\left\{\left\|\int_{t_0}^{t}\left\{\widetilde{D}_x(s)\xi(s)+\left(\widetilde{D}_x(s)-D_x(s)\right)x_1(s)+\left(\widetilde{D}_u(s)-D_u(s)\right)\delta u(s)\right\}ds\right\|_2^2 \right. \\
 &\hspace{11mm}+\left\|\int_{t_0}^{t}\left\{\widetilde{F}_x(s)\xi(s)+\left(\widetilde{F}_x(s)-F_x(s)\right)x_1(s)+\left(\widetilde{F}_u(s)-F_u(s)\right)\delta u(s)\right\}dW(s)\right\|_2^2\\
 &\hspace{11mm}\left.+\left\|\int_{t_0}^{t}dW(s)\left\{\widetilde{G}_x(s)\xi(s)+\left(\widetilde{G}_x(s)-G_x(s)\right)x_1(s)+\left(\widetilde{G}_u(s)-G_u(s)\right)\delta u(s)\right\}\right\|_2^2\right\}\\
& \leq\mathcal{C}\left\{\int_{t_0}^{t}\|\xi(s)\|_2^2ds+\int_{t_0}^{t}\left(\|\delta x (s)\|_2^2+\|\delta u(s)\|^2_{\mathcal{H}}\right)\|x_1(s)\|_2^2ds+\int_{t_0}^{t}\|\delta u(s)\|^4_\mathcal{H}ds\right\}\\
 & \leq\mathcal{C}\left\{\int_{t_0}^{t}\|\xi(s)\|_2^2ds+\int_{t_0}^{t}\|\delta u(s)\|^4_\mathcal{H}ds\right\},\ t\in[t_0,T].
\end{align*}
This, together with Gronwall's inequality, implies that
\begin{equation*}
\sup_{t\in[t_0,T]}\|\xi(t)\|_2\leq\mathcal{C} \|\delta u\|_{L^4(t_0,T;\mathcal{H})}^2.
\end{equation*}
This completes the proof of Lemma \ref{estimate}.
\end{proof}
The main result of this paper is articulated as follows.
\begin{thm}\label{second order necessary condition for optimal controls}
Let assumptions $\textbf{(H1)-(H4)}$ hold, and let $\bar{u}(\cdot)\in  \mathcal{U}^4[t_0,T]$ be an optimal control with  $\bar{x}(\cdot)$  as the corresponding optimal state for $\textbf{Problem (QOP)}$.
Then, for the relaxed transposition solution $(P(\cdot),Q^{(\cdot)},\widehat{Q}^{(\cdot)})$ to the equation \eqref{BQSDE-P},  and for any $u(\cdot)\in \mathcal{U}^4[t_0,T]$ with
\begin{equation}\label{condition}
  \int_{t_0}^{T}\langle\mathbb{H}_u(t), u(t)-\bar{u}(t) \rangle_{\mathcal{H}}dt=0,
\end{equation}
the following second order necessary condition holds:
{\small
\begin{align}
  {\rm Re}&\Bigg\{\int_{t_0}^{T}\Big\{\langle\mathbb{H}_{uu}(t)(u(t)-\bar{u}(t)), u(t)-\bar{u}(t) \rangle_{\mathcal{H}}\nonumber\\
  &\hspace{10mm}+\langle(F_x(t)+\Upsilon G_x(t))^*P(t)(F_x(t)+\Upsilon G_x(t)) (u(t)-\bar{u}(t)), u(t)-\bar{u}(t)\rangle_{\mathcal{H}}\nonumber\\
&\hspace{10mm}+2\langle \left(\mathbb{H}_{xu}(t)+D_u(t)^*P(t)+(F_x(t)+\Upsilon G_x(t))^*P(t)(F_x(t)+\Upsilon G_x(t))\right)x_1(t),  u(t)-\bar{u}(t) \rangle_{\mathcal{H}} \label{result}\\
&\hspace{10mm}+\left\langle Q^{(t_0)}(0,D_u(t)(u(t)-\bar{u}(t)),(F_x(t)+\Upsilon G_x(t))(u(t)-\bar{u}(t))),(F_x(t)+\Upsilon G_x(t))(u(t)-\bar{u}(t))\right\rangle\nonumber\\
 &\hspace{10mm}+\left\langle(F_x(t)+\Upsilon G_x(t))(u(t)-\bar{u}(t)),\widehat{Q}^{(t_0)}(0,D_u(t)(u(t)-\bar{u}(t)),(F_x(t)+\Upsilon G_x(t))(u(t)-\bar{u}(t)))\right\rangle \Big\}dt\Bigg\}\nonumber\\
\leq &\hspace{1mm}0.\nonumber
 \end{align}}
\end{thm}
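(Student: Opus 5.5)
Because $U$ is convex, I will use a convex perturbation rather than a spike variation. For $\varepsilon\in(0,1)$ set $u^\varepsilon:=\bar u+\varepsilon\,\delta u$ with $\delta u=u-\bar u$. Then $u^\varepsilon\in\mathcal U^4[t_0,T]$, and optimality gives $\mathcal J(u^\varepsilon)-\mathcal J(\bar u)\ge 0$. Let $x^\varepsilon$ be the corresponding state. Lemma \ref{estimate}, applied with $\varepsilon\delta u$ in place of $\delta u$, gives
\[
\|x^\varepsilon-\bar x\|_{C}=O(\varepsilon),\qquad \|x^\varepsilon-\bar x-\varepsilon x_1\|_{C}=O(\varepsilon^2).
\]
Here $x_1$ is the first variation \eqref{the first variation equation} driven by $\delta u$, and by linearity its value for $\varepsilon\delta u$ is $\varepsilon x_1$. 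I will sharpen this to a second-order expansion
\[
x^\varepsilon-\bar x=\varepsilon x_1+\tfrac{\varepsilon^2}{2}x_2+o(\varepsilon^2)\quad\text{in } C_\mathbb{A}([t_0,T];L^2(\mathscr C)).
\]
To do so, write the equation for $r^\varepsilon:=x^\varepsilon-\bar x-\varepsilon x_1-\frac{\varepsilon^2}{2}x_2$ using second-order Taylor formulas with integral remainders, as in the proof of Lemma \ref{estimate}. The remainder terms involve $\Phi_{xx}$, $\Phi_{xu}$ and $\Phi_{uu}$ evaluated along intermediate points minus their values at $(\bar x,\bar u)$. These go to zero by continuity of the second derivatives in (H3) together with dominated convergence, and the bounds in (H3) supply the domination. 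The Itô isometry and Gronwall's inequality then give $\|r^\varepsilon\|=o(\varepsilon^2)$. The same expansion applied to $L$ and $g$ via (H4) yields
\begin{align*}
0\le{}&\mathcal J(u^\varepsilon)-\mathcal J(\bar u)\\
={}&\varepsilon\,{\rm Re}\Big\{\int_{t_0}^T\big(\langle L_x,x_1\rangle+\langle L_u,\delta u\rangle\big)dt+\langle g_x(\bar x(T)),x_1(T)\rangle\Big\}\\
&+\tfrac{\varepsilon^2}{2}{\rm Re}\Big\{\int_{t_0}^T\big(\langle L_x,x_2\rangle+L_{xx}(x_1,x_1)+2L_{xu}(x_1,\delta u)+L_{uu}(\delta u,\delta u)\big)dt\\
&\qquad\qquad+\langle g_x,x_2(T)\rangle+g_{xx}(x_1(T),x_1(T))\Big\}+o(\varepsilon^2).
\end{align*}

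Next I will remove the linear terms in $x_1$ and $x_2$ by duality with the first adjoint equation \eqref{BQSDE-Y}. I apply the fermion Itô formula to $\langle y(t),x_1(t)\rangle$ and to $\langle y(t),x_2(t)\rangle$. The parity operator $\Upsilon$ handles the $dW\,G$ term: moving $dW$ to the right turns $G$ into $\Upsilon G$, which is why $F_x+\Upsilon G_x$ appears. The coefficient of $\varepsilon$ then becomes $-\int_{t_0}^T{\rm Re}\langle\mathbb H_u(t),\delta u(t)\rangle_{\mathcal H}dt$, and this vanishes by \eqref{condition}. The $x_2$ terms combine with the $L$ terms into the Hessian of the Hamiltonian, so the $\varepsilon^2$ coefficient equals $-\frac12\,{\rm Re}$ of
\[
\int_{t_0}^T\Big(\langle \mathbb H_{xx}x_1,x_1\rangle+2\langle\mathbb H_{xu}x_1,\delta u\rangle+\langle\mathbb H_{uu}\delta u,\delta u\rangle\Big)dt-\langle g_{xx}x_1(T),x_1(T)\rangle .
\]
Dividing by $\varepsilon^2$ and letting $\varepsilon\to0$ shows that this expression has nonnegative real part, up to the sign convention of the statement.

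Finally, I will eliminate the term $\langle g_{xx}x_1(T),x_1(T)\rangle-\int\langle\mathbb H_{xx}x_1,x_1\rangle$ using Definition \ref{definition of the relaxed tran solution}. I take $t=t_0$, $\zeta_1=\zeta_2=0$, $\mu_1=\mu_2=D_u\delta u$ and $\nu_1=\nu_2=(F_u+\Upsilon G_u)\delta u$, so that $\phi_1=\phi_2=x_1$. The definition then expresses $\langle P(T)x_1(T),x_1(T)\rangle+\int\langle\mathbb H_{xx}x_1,x_1\rangle$ as a sum of $P$-terms in $\delta u$ and $x_1$ plus the two $Q^{(t_0)}$ and $\widehat Q^{(t_0)}$ terms. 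Since $P(T)=-g_{xx}$, substituting this identity produces exactly the quadratic form in \eqref{result}.

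I expect the main obstacle to be the $o(\varepsilon^2)$ remainder estimate. With only continuity, and no uniform modulus, of the second derivatives, I need a dominated-convergence argument inside the noncommutative Itô isometry, together with care in bookkeeping the $\Upsilon$-twisted terms. I also have to match the Itô correction from the fermion Itô formula to the specific combination $(F_x+\Upsilon G_x)^*P(F_x+\Upsilon G_x)$ in the statement. The statement writes $F_x+\Upsilon G_x$ where $F_u+\Upsilon G_u$ would be the natural choice, so I will follow the definition and identify the operators accordingly.
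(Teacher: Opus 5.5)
Your proposal is correct and follows the paper's proof essentially step for step. The sequence is the same: convex perturbation $u^\varepsilon=\bar u+\varepsilon\delta u$; the expansion $\delta x^\varepsilon=\varepsilon x_1+\frac{\varepsilon^2}{2}x_2+o(\varepsilon^2)$ via integral-remainder Taylor formulas, dominated convergence, the It\^{o} isometry and Gronwall; duality through the fermion It\^{o} formula for $\langle y,x_1\rangle$ and $\langle y,x_2\rangle$; and finally the relaxed transposition identity with $t=t_0$, $\zeta_i=0$, $\mu_i=D_u\delta u$, $\nu_i=(F_u+\Upsilon G_u)\delta u$. Two small remarks: the paper passes to a subsequence $\varepsilon_n$, which your uniform $O(\varepsilon)$ bound shows is unnecessary, and the paper's proof does use $F_u+\Upsilon G_u$ exactly where you flagged the statement's misprinted $F_x+\Upsilon G_x$.
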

\begin{proof}
Due to the length of the proof, we will present it in several steps.

\textbf{Step 1.} 
Firstly, by definition, it is obvious that $\delta u(\cdot)=u(\cdot)-\bar{u}(\cdot)\in L^4(t_0,T;\mathcal{H})$.
By the convexity of $U$, we have that, for any $\varepsilon\in[0,1]$,
\begin{equation*}
  u^\varepsilon(\cdot):=\bar{u}(\cdot)+\varepsilon\delta u(\cdot)=\bar{u}(\cdot)+\varepsilon(u(\cdot)-\bar{u}(\cdot))=(1-\varepsilon)\bar{u}(\cdot)+\varepsilon u(\cdot)\in L^4(t_0,T;\mathcal{H}).
\end{equation*}
Denote by $x^\varepsilon(\cdot)$ the corresponding solution to the following equation with $u^\varepsilon(\cdot)$, that is,
\begin{equation}\label{FSDE-xvarepsilon}
\left\{
\begin{aligned}
 &dx^\varepsilon(t)=D(t,x^\varepsilon(t),u^\varepsilon(t))dt+F(t,x^\varepsilon(t),u^\varepsilon(t))dW(t)+dW(t)G(t,x^\varepsilon(t),u^\varepsilon(t)),\ \textrm{in}\ [t_0,T],\\
  & x^\varepsilon(t_0)=x_0.
\end{aligned}
\right.
\end{equation}

Let $\delta x^\varepsilon(\cdot):=x^\varepsilon(\cdot)-\bar{x}(\cdot)$.
It follows from Lemma \ref{estimate} that 
\begin{equation}\label{two estimate of delta x-x-1 and itself}
\begin{aligned}
  \|\delta x^\varepsilon(\cdot)\|_{C_\mathbb{A}([t_0,T];L^2(\mathscr{C}))}\leq&\hspace{1mm} \mathcal{C}\varepsilon\|\delta u\|_{L^2(t_0,T;\mathcal{H})},\\
 \|\delta x^\varepsilon(\cdot)-\varepsilon x_1(\cdot)\|_{C_\mathbb{A}([t_0,T];L^2(\mathscr{C}))}&\leq \mathcal{C}\varepsilon^2\|\delta u\|^2_{L^4(t_0,T;\mathcal{H})}.
 \end{aligned}
\end{equation}
For any $\Phi=D,F,G,L$ and $t\in[t_0,T]$, put
  \begin{align*}
  \Phi_{xx}^\varepsilon(t)&:=\int_0^1(1-\theta)\Phi_{xx}(t, \bar{x}(t)+\theta \delta x^\varepsilon(t), \bar{u}(t)+\theta \varepsilon \delta u(t))d\theta,\\
  \Phi_{xu}^\varepsilon(t)&:=\int_0^1(1-\theta)\Phi_{xu}(t, \bar{x}(t)+\theta \delta x^\varepsilon(t), \bar{u}(t)+\theta \varepsilon \delta u(t))d\theta,
  \end{align*}
  and 
  $$\Phi_{uu}^\varepsilon(t):=\int_0^1(1-\theta)\Phi_{uu}(t, \bar{x}(t)+\theta \delta x^\varepsilon(t), \bar{u}(t)+\theta \varepsilon \delta u(t))d\theta.$$
Besides, we define
\begin{equation*}
  g_{xx}^\varepsilon(\bar{x}(T)):=\int_0^1(1-\theta)g_{xx}(\bar{x}(T)+\theta \delta x^\varepsilon(T))d\theta.
\end{equation*}
It is easy to verify that
\begin{equation}\label{First chafen equation}
\left\{
\begin{aligned}
d\delta x^\varepsilon(t)=&\Big\{D_x(t)\delta x^\varepsilon(t)+\varepsilon D_u(t)\delta u(t)+D_{xx}^\varepsilon(t)\left(\delta x^\varepsilon(t),\delta x^\varepsilon(t)\right)\\
&\hspace{2mm}+2\varepsilon D_{xu}^\varepsilon(t)(\delta x^\varepsilon(t),\delta u(t))+\varepsilon^2D_{uu}^\varepsilon(t)(\delta u(t),\delta u(t))\Big\}dt\\
&+\Big\{F_x(t)\delta x^\varepsilon(t)+\varepsilon F_u(t)\delta u(t)+F_{xx}^\varepsilon(t)(\delta x^\varepsilon(t),\delta x^\varepsilon(t))\\
&\hspace{6mm}+2\varepsilon F_{xu}^\varepsilon(t)(\delta x^\varepsilon(t),\delta u(t))+\varepsilon^2 F_{uu}^\varepsilon(t)(\delta u(t),\delta u(t))\Big\}dW(t)\\
&+dW(t)\Big\{G_x(t)\delta x^\varepsilon(t)+\varepsilon G_u(t)\delta u(t)+G_{xx}^\varepsilon(t)(\delta x^\varepsilon(t),\delta x^\varepsilon(t))\\
&\hspace{16mm}+2\varepsilon G_{xu}^\varepsilon(t)(\delta x^\varepsilon(t),\delta u(t))+\varepsilon^2 G_{uu}^\varepsilon(t)(\delta u(t),\delta u(t))\Big\},\ {\rm{in}}\ [t_0,T],\\
\delta x^\varepsilon(t_0)=&0.
\end{aligned}
\right.
\end{equation}

\textbf{Step 2.} We claim that there exists a sequence $\{\varepsilon_n\}_{n=1}^\infty\subset (0,1]$ such that $\lim\limits_{n\to\infty}\varepsilon_n=0$, and 
\begin{equation}\label{the key estimate}
\left\|\delta x^{\varepsilon_n}(\cdot)-\varepsilon_nx_1(\cdot)-\frac{\varepsilon_n^2}{2}x_2(\cdot)\right\|_{C_\mathbb{A}([t_0,T];L^2(\mathscr{C}))}=o(\varepsilon_n^2), \textrm{ as n}\to\infty.
\end{equation}
For this purpose, we write
\begin{equation}
z^\varepsilon(\cdot):=\varepsilon^{-2}\left(\delta x^\varepsilon(\cdot)-\varepsilon x_1(\cdot)-\frac{\varepsilon^2}{2}x_2(\cdot)\right),
\end{equation}
then $z^\varepsilon(\cdot)$ satisfies
\begin{equation}
\left\{
\begin{aligned}
dz^\varepsilon(t)=&\left\{D_x(t)z^\varepsilon(t)+D_{xx}^\varepsilon(t)\left(\frac{\delta x^\varepsilon(t)}{\varepsilon},\frac{\delta x^\varepsilon(t)}{\varepsilon}\right)-\frac{1}{2}D_{xx}(t)(x_1(t),x_1(t))\right.\\
&\hspace{3mm}+2D_{xu}^\varepsilon(t) \Big(\frac{\delta x^\varepsilon(t)}{\varepsilon},\delta u(t)\Big)-D_{xu}(t)(x_1(t),\delta u(t))\\
&\hspace{3mm}\left.+D^\varepsilon_{uu}(t)(\delta u(t),\delta u(t))-\frac{1}{2}D_{uu}(t)(\delta u(t),\delta u(t))\right\}dt\\
&+\left\{F_x(t)z^\varepsilon(t)+F_{xx}^\varepsilon(t)\left(\frac{\delta x^\varepsilon(t)}{\varepsilon},\frac{\delta x^\varepsilon(t)}{\varepsilon}\right)-\frac{1}{2}F_{xx}(t)\left(x_1(t),x_1(t)\right)\right.\\
&\hspace{7mm}+2F_{xu}^\varepsilon(t) \left(\frac{\delta x^\varepsilon(t)}{\varepsilon},\delta u(t)\right)-F_{xu}(t)(x_1(t),\delta u(t))\\
&\hspace{7mm}\left.+F^\varepsilon_{uu}(t)(\delta u(t),\delta u(t))-\frac{1}{2}F_{uu}(t)(\delta u(t),\delta u(t))\right\}dW(t)\\
&+dW(t)\left\{G_x(t)z^\varepsilon(t)+G_{xx}^\varepsilon(t)\left(\frac{\delta x^\varepsilon(t)}{\varepsilon},\frac{\delta x^\varepsilon(t)}{\varepsilon}\right)-\frac{1}{2}G_{xx}(t)(x_1(t),x_1(t))\right.\\
&\hspace{18mm}+2G_{xu}^\varepsilon(t) \left(\frac{\delta x^\varepsilon(t)}{\varepsilon},\delta u(t)\right)-G_{xu}(t)(x_1(t),\delta u(t))\\
&\hspace{18mm}\left.+G^\varepsilon_{uu}(t)(\delta u(t),\delta u(t))-\frac{1}{2}G_{uu}(t)(\delta u(t),\delta u(t))\right\},\ {\rm{in}}\ [t_0,T],\\
z^\varepsilon(t_0)=&0.
\end{aligned}
\right.
\end{equation}
For any $t\in[t_0,T]$, let
\begin{align*}
  \Phi_{D,\varepsilon}(t):= &\hspace{1mm}D_{xx}^\varepsilon(t)\left(\frac{\delta x^\varepsilon(t)}{\varepsilon},\frac{\delta x^\varepsilon(t)}{\varepsilon}\right)-\frac{1}{2}D_{xx}(t)(x_1(t),x_1(t))+2D_{xu}^\varepsilon(t) \left(\frac{\delta x^\varepsilon(t)}{\varepsilon},\delta u(t)\right) \\
  &\hspace{1mm} -D_{xu}(t)(x_1(t),\delta u(t))+D^\varepsilon_{uu}(t)(\delta u(t),\delta u(t))-\frac{1}{2}D_{uu}(t)(\delta u(t),\delta u(t)) ,\\
   \Phi_{F,\varepsilon}(t):= &\hspace{1mm}F_{xx}^\varepsilon(t)\left(\frac{\delta x^\varepsilon(t)}{\varepsilon},\frac{\delta x^\varepsilon(t)}{\varepsilon}\right)-\frac{1}{2}F_{xx}(t)(x_1(t),x_1(t))+2F_{xu}^\varepsilon(t) \left(\frac{\delta x^\varepsilon(t)}{\varepsilon},\delta u(t)\right) \\
  & \hspace{1mm}-F_{xu}(t)(x_1(t),\delta u(t))+F^\varepsilon_{uu}(t)(\delta u(t),\delta u(t))-\frac{1}{2}F_{uu}(t)(\delta u(t),\delta u(t)), \\
   \Phi_{G,\varepsilon}(t):= &\hspace{1mm}G_{xx}^\varepsilon(t)\left(\frac{\delta x^\varepsilon(t)}{\varepsilon},\frac{\delta x^\varepsilon(t)}{\varepsilon}\right)-\frac{1}{2}G_{xx}(t)(x_1(t),x_1(t))+2G_{xu}^\varepsilon(t) \left(\frac{\delta x^\varepsilon(t)}{\varepsilon},\delta u(t)\right) \\
  & \hspace{1mm}-G_{xu}(t)(x_1(t),\delta u(t))+G^\varepsilon_{uu}(t)(\delta u(t),\delta u(t))-\frac{1}{2}G_{uu}(t)(\delta u(t),\delta u(t)).
\end{align*}
Then, 
\begin{equation}\label{the integral-type of r_2}
\begin{aligned}
z^\varepsilon(t)=&\int_{t_0}^t\left\{D_x(s)z^\varepsilon(s)+\Phi_{D,\varepsilon}(s)\right\}ds+\int_{t_0}^t\left\{F_x(s)z^\varepsilon(s)+\Phi_{F,\varepsilon}(s)\right\}dW(s)\\
&+\int_{t_0}^t dW(s)\left\{G_x(s)z^\varepsilon(s)+\Phi_{G,\varepsilon}(s)\right\},\ t\in[t_0,T].
\end{aligned}
\end{equation}
Therefore, we deduce that for any $t\in[t_0,T]$,
\begin{equation}\label{estimate of r_2}
 \|z^\varepsilon(t)\|_2^2\leq\mathcal{C}\left\{ \int_{t_0}^t\|z^\varepsilon(s)\|^2_2ds+\int_{t_0}^{t}\{\|\Phi_{D,\varepsilon}(s)\|^2_2+\|\Phi_{F,\varepsilon}(s)\|^2_2+\|\Phi_{G,\varepsilon}(s)\|^2_2\}ds\right\}.
\end{equation}
By means of $$\|\delta x^\varepsilon(\cdot)\|_{C_\mathbb{A}([{t_0},T];L^2(\mathscr{C}))}\leq \mathcal{C}\varepsilon \|\delta u \|_{L^2({t_0},T;\mathcal{H})},$$ 
we can find that there exists a subsequence $\{\varepsilon_n\}_{n=1}^\infty\subset (0,1]$ such that $\varepsilon_n\to 0$ and $x^{\varepsilon_n}(\cdot)\to \bar{x}(\cdot)$ a.e in $[t_0,T]$, as $n\to\infty$. Then, by  Lebesgue's dominated convergence theorem, we deduce that
\begin{equation}\label{estimate Phi-(D,varepsilon_n)}
  \begin{aligned}
\lim_{n\to\infty} &\int_{t_0}^T\|\Phi_{D,\varepsilon_n}(s)\|_2^2ds \\
 &\leq \lim_{n\to\infty}\int_{t_0}^T\left\|D_{xx}^{\varepsilon_n}(s)\left(\frac{\delta x^{\varepsilon_n}(s)}{\varepsilon_n},\frac{\delta x^{\varepsilon_n}(s)}{\varepsilon_n}\right)-\frac{1}{2}D_{xx}(s)(x_1(s),x_1(s))\right.    \\
  &\hspace{22mm}+2D_{xu}^{\varepsilon_n}(s)\left(\frac{\delta x^{\varepsilon_n}(s)}{\varepsilon_n},\delta u(s)\right)-D_{xu}(s)(x_1(s),\delta u(s))  \\
   &\hspace{22mm}\left.+D^{\varepsilon_n}_{uu}(s)(\delta u(s),\delta u(s))-\frac{1}{2}D_{uu}(s)(\delta u(s),\delta u(s))\right\|_2^2ds  \\
  & \leq \mathcal{C}\lim_{n\to\infty}\int_{t_0}^T\left\{\left\|D_{xx}^{\varepsilon_n}(s)\left\{\left(\frac{\delta x^{\varepsilon_n}(s)}{\varepsilon_n},\frac{\delta x^{\varepsilon_n}(s)}{\varepsilon_n}\right)-(x_1(s),x_1(s))\right\}\right\|_2^2\right. \\
   &\hspace{26mm}+\left\|D_{xx}^{\varepsilon_n}(s)-\frac{1}{2}D_{xx}(s)\right\|^2_{\mathcal{L}(L^2(\mathscr{C}),L^2(\mathscr{C});L^2(\mathscr{C}))}\|x_1(s)\|_{2}^4 \\
   &\hspace{26mm}+2\|D_{xu}^{\varepsilon_n}(s)\|^2_{\mathcal{L}(L^2(\mathscr{C}),\mathcal{H};L^2(\mathscr{C}))}\left\|\frac{\delta x^{\varepsilon_n}(s)}{\varepsilon_n}-x_1(s)\right\|_2^2\|\delta u(s)\|^2_\mathcal{H} \\
   &\hspace{26mm}+\|D_{xu}^{\varepsilon_n}(s)-D_{xu}(s)\|^2_{\mathcal{L}(L^2(\mathscr{C}),\mathcal{H};L^2(\mathscr{C}))}\|x_1(s)\|_2^2\|\delta u(s)\|^2_\mathcal{H} \\
    &\hspace{26mm}\left.+\left\|D^{\varepsilon_n}_{uu}(s)-\frac{1}{2}D_{uu}(s)\right\|_{\mathcal{L}(\mathcal{H},\mathcal{H};L^2(\mathscr{C}))}\|\delta u(s)\|_\mathcal{H}^4\right\} ds  \\
    &=0. 
\end{aligned}
\end{equation}
Similarly,
\begin{equation}\label{estimate Phi-(F,varepsilon_n) and Phi-(G,varepsilon_n)}
 \lim_{n\to\infty}\int_{t_0}^T\|\Phi_{F,\varepsilon_n}(s)\|_2^2ds=0,\quad \lim_{n\to\infty}\int_{t_0}^T\|\Phi_{G,\varepsilon_n}(s)\|_2^2ds=0.
\end{equation}
By combining \eqref{estimate of r_2}-\eqref{estimate Phi-(F,varepsilon_n) and Phi-(G,varepsilon_n)}, along with the Gronwall inequality, we derive   \eqref{the key estimate}.

\textbf{Step 3.} By Taylor's formula \cite[Lemma 3.4]{Z.Z-2017}, we have 
\begin{equation}\label{Taylor of L}
\begin{aligned}
&L(t, x^\varepsilon(t),u^\varepsilon(t))-L(t,\bar{x}(t),\bar{u}(t))\\
&=\langle L_x(t),\delta x^\varepsilon(t) \rangle+\varepsilon\langle L_u(t), \delta u(t) \rangle_\mathcal{H}+\langle L_{xx}^\varepsilon(t)\delta x^\varepsilon(t), \delta x^\varepsilon(t)\rangle\\
&\indent+\langle 2 L_{xu}^\varepsilon(t)\delta x^\varepsilon(t), \varepsilon\delta u(t)\rangle_\mathcal{H}+\varepsilon^2\langle L_{uu}^\varepsilon(t)\delta u(t), \delta u(t)\rangle_\mathcal{H},
\end{aligned}
\end{equation}
and 
\begin{equation}\label{Taylor of g}
g( x^\varepsilon(T))-g(\bar{x}(T))=\langle g_x(\bar{x}(T)),\delta x^\varepsilon(T) \rangle+\langle g_{xx}^\varepsilon(T)\delta x^\varepsilon(T), \delta x^\varepsilon(T)\rangle.
\end{equation}
Similar to the proof of \eqref{the key estimate}, we can show that for the subsequence $\{\varepsilon_n\}_{n=1}^\infty\subset (0,1]$ such that $x^{\varepsilon_n}(\cdot)\to \bar{x}(\cdot)$ a.e. in $[t_0,T]$.
By means of \eqref{two estimate of delta x-x-1 and itself}, we have that
\begin{align*}
  &\lim_{n\to\infty}\frac{1}{\varepsilon_n^2}\left\{\langle g_{xx}^{\varepsilon_n}(\bar{x}(T))\delta x^{\varepsilon_n}(T), \delta x^{\varepsilon_n}(T)\rangle-\frac{\varepsilon_n^2}{2}\langle g_{xx}(\bar{x}(T))x_1(T),x_1(T) \rangle\right\} \\
 &\hspace{1mm}= \lim_{n\to\infty}\frac{1}{\varepsilon_n^2}\left\{\left\langle \left(g_{xx}^{\varepsilon_n}(\bar{x}(T))-\frac{1}{2} g_{xx}(\bar{x}(T))\right)\delta x^{\varepsilon_n}(T), \delta x^{\varepsilon_n}(T)\right\rangle\right.\\
 &\hspace{1mm}\indent\indent\indent\quad+\left\langle\frac{1}{2} g_{xx}(\bar{x}(T))(\delta x^{\varepsilon_n}(T)-\varepsilon_n x_1(T)),\delta x^{\varepsilon_n}(T) \right\rangle\\
&\hspace{1mm}\indent\indent\indent\quad\left. +\left\langle\frac{1}{2} g_{xx}(\bar{x}(T))\varepsilon_n x_1(T), (\delta x^{\varepsilon_n}(T)-\varepsilon_n x_1(T))\right\rangle\right\}\\
&\hspace{1mm}=0.
\end{align*}
Similarly, one gets that
\begin{gather*} 
\lim_{n\to\infty}\int_{t_0}^T\left\{\langle L_{uu}^{\varepsilon_n}(t)\delta u(t), \delta u(t)\rangle_\mathcal{H}-\frac{1}{2}\langle L_{xx}(t)\delta u(t),\delta u(t) \rangle_\mathcal{H}  \right\}dt=0,\\ \lim_{n\to\infty}\frac{1}{\varepsilon_n^2}\int_{t_0}^T\left\{\langle L_{xx}^{\varepsilon_n}(t)\delta x^{\varepsilon_n}(t), \delta x^{\varepsilon_n}(t)\rangle-\frac{\varepsilon_n^2}{2}\langle L_{xx}(t)x_1(t),x_1(t) \rangle\right\}dt=0,
\end{gather*} 
and
\begin{equation*}
  \lim_{n\to\infty}\frac{1}{\varepsilon_n^2}\int_{t_0}^T\left\{\langle L_{xu}^{\varepsilon_n}(t)\delta x^{\varepsilon_n}(t), \varepsilon_n\delta u(t)\rangle_\mathcal{H}-\frac{\varepsilon_n^2}{2}\langle L_{xu}(t)x_1(t),\delta u(t)\rangle_\mathcal{H}\right\}dt=0.
\end{equation*}
These, together with \eqref{the key estimate}, \eqref{Taylor of L} and \eqref{Taylor of g}, imply that
  \begin{align}
    & \mathcal{J}(u^{\varepsilon_n}(\cdot))-\mathcal{J}(\bar{u}(\cdot))\nonumber\\
    &=\textrm{Re}\Bigg\{\langle g_x(\bar{x}(T)),\delta x^\varepsilon(T) \rangle+\langle g_{xx}^\varepsilon(T)\delta x^\varepsilon(T), \delta x^\varepsilon(T)\rangle\nonumber\\
     &\hspace{1.2cm}+\int_{t_0}^T\Big\{\langle L_x(t),\delta x^\varepsilon(t) \rangle+\varepsilon\langle L_u(t), \delta u(t) \rangle_\mathcal{H}+\langle L_{xx}^\varepsilon(t)\delta x^\varepsilon(t), \delta x^\varepsilon(t)\rangle \nonumber\\
&\hspace{2.5cm} +\langle 2 L_{xu}^\varepsilon(t)\delta x^\varepsilon(t), \varepsilon\delta u(t)\rangle_\mathcal{H}+\varepsilon^2\langle L_{uu}^\varepsilon(t)\delta u(t), \delta u(t)\rangle_\mathcal{H}\Big\}dt \Bigg\}\label{zuocha of cost functional}\\
     & =\textrm{Re}\Bigg\{\varepsilon_n\langle g_x(\bar{x}(T)),x_1(T)\rangle+\frac{\varepsilon_n^2}{2}\langle g_x(\bar{x}(T)),x_2(T)\rangle+\frac{\varepsilon_n^2}{2}\langle g_{xx}(\bar{x}(T))x_1(T),x_1(T)\rangle\nonumber\\
     &\hspace{1.2cm}+\int_{t_0}^T\bigg\{\frac{\varepsilon_n^2}{2}\Big[\langle L_{xx}(t)x_1(t), x_1(t)\rangle+2\langle L_{xu}(t)x_1(t), \delta u(t)\rangle_\mathcal{H}+\langle L_{uu}(t)\delta u(t), \delta u(t)\rangle_\mathcal{H}\Big]\nonumber\\
    & \hspace{2.6cm}+\varepsilon_n\langle L_x(t),x_1(t)\rangle+\frac{\varepsilon_n^2}{2} \langle L_x(t),x_2(t)\rangle+\varepsilon_n\langle L_u(t),\delta u(t)\rangle_\mathcal{H}\bigg\} dt\Bigg\}\nonumber\\
    & \hspace{5.5mm}+\textit{\textbf{o}}(\varepsilon_n^2),\quad \textrm{as}\  n\to\infty.\nonumber
  \end{align}

\textbf{Step 4.} By applying Fermion It\^{o}'s formula \cite[Theorem 5.2]{A.H-2} to $\langle y(t), x_1(t)\rangle$, we have that
\begin{align}\label{y-x_1}
&-\langle g_x(\bar{x}(T)), x_1(T)\rangle\nonumber\\
&=\int_{t_0}^{T}\Big\{\langle y(t), D_x(t)x_1(t)\rangle+\langle y(t),D_u(t)\delta u(t) \rangle\Big\} dt\nonumber\\
&\hspace{5mm}+\int_{t_0}^{T}\langle Y(t), (F_x(t)+\Upsilon G_x(t))x_1(t)+(F_u(t)+\Upsilon G_u(t))\delta u(t)\rangle dt\\
&\hspace{5mm}-\int_{t_0}^{T}\langle D_x(t)^*y(t)+(F_x(t)+\Upsilon G_x(t))^*Y(t), x_1(t)\rangle dt+\int_{t_0}^{T}\langle L_x(t),x_1(t)\rangle dt\nonumber\\
&=\int_{t_0}^{T}\Big\{\langle y(t),D_u(t)\delta u(t) \rangle+\langle L_x(t),x_1(t)\rangle+\langle Y(t), (F_u(t)+\Upsilon G_u(t))\delta u(t)\rangle\Big\} dt\nonumber.
\end{align}
Similar to \eqref{y-x_1}, we also derive that 
\begin{align}\label{y-x_2}
-&\langle g_x(\bar{x}(T)), x_2(T)\rangle\nonumber\\
=&\int_{t_0}^{T}\Big\{\langle L_x(t),x_2(t)\rangle+\langle y(t),D_{xx}(t)(x_1(t),x_1(t))\rangle+\langle y(t),D_{uu}(t)(\delta u(t),\delta u(t))\rangle\nonumber\\
&\hspace{9mm}+\langle y(t),2D_{xu}(t)(x_1(t),\delta u(t))\rangle+\langle Y(t), 2(F_{xu}(t)+\Upsilon G_{xu}(t))(x_1(t),\delta u(t))\rangle\\
&\hspace{9mm}+\langle Y(t), (F_{xx}(t)+\Upsilon G_{xx}(t))(x_1(t),x_1(t))\rangle+\langle Y(t), (F_{uu}(t)+\Upsilon G_{uu}(t))(\delta u(t),\delta u(t))\rangle\Big\} dt\nonumber.
\end{align}
By \cite[Theorem 1.2]{W.W-2}, the triple $\left(P(\cdot),Q^{(\cdot)},\widehat{Q}^{(\cdot)}\right)$ is the relaxed transposition to \eqref{BQSDE-P} in the sense of  Definition \ref{definition of the relaxed tran solution}, then
\begin{equation}\label{P-x_1}
 \begin{aligned}
   -&\langle g_{xx}(\bar{x}(T)) x_1(T), x_1(T)\rangle +\int_{t_0}^{T}\langle\mathbb{H}_{xx}(t)x_1(t),x_1(t)\rangle dt\\
  &\quad =\int_{t_0}^{T}\left\{\langle P(t)x_1(t), D_u(t)\delta u(t)\rangle+\langle P(t)D_u(t)\delta u(t), x_1(t)\rangle\right\} dt \\
   &\qquad+\int_{t_0}^{T}\langle P(t)(F_{x}(t)+\Upsilon G_{x}(t))x_1(t), (F_{u}(t)+\Upsilon G_{u}(t))\delta u(t)\rangle dt\\
   &\qquad+\int_{t_0}^{T}\langle P(t)(F_{u}(t)+\Upsilon G_{u}(t))\delta u(t), (F_{u}(t)+\Upsilon G_{u}(t))\delta u(t)\rangle dt  \\
    & \qquad+\int_{t_0}^{T}\langle P(t)(F_{u}(t)+\Upsilon G_{u}(t))\delta u(t), (F_{x}(t)+\Upsilon G_{x}(t))x_1(t)\rangle dt\\
     &\qquad+\int_{t_0}^T\left\langle Q^{(t_0)}(0,D_u(t)\delta u,(F_{u}(t)+\Upsilon G_{u}(t))\delta u)(t),  (F_{u}(t)+\Upsilon G_{u}(t))\delta u(t)\right\rangle dt\\
     &\qquad+\int_{t_0}^T\left\langle(F_{u}(t)+\Upsilon G_{u}(t))\delta u(t), \widehat{Q}^{(t_0)}(0,D_u(t)\delta u,(F_{u}(t)+\Upsilon G_{u}(t))\delta u)(t)\right\rangle dt.
 \end{aligned}
\end{equation}
By substituting \eqref{y-x_1}-\eqref{P-x_1} into \eqref{zuocha of cost functional}, we deduce that
\begin{align*}
0\leq &\hspace{1mm}\frac{\mathcal{J}(u^{\varepsilon_n}(\cdot))-\mathcal{J}(\bar{u}(\cdot))}{\varepsilon_n^2}\nonumber\\
=&\hspace{0.5mm}\textrm{Re}\Bigg\{\int_{t_0}^T\left\{\frac{1}{\varepsilon_n}\langle L_x(t),x_1(t)\rangle+\frac{1}{2}\langle L_x(t),x_2(t)\rangle+\frac{1}{\varepsilon_n}\langle L_u(t),\delta u(t)\rangle_\mathcal{H}\right.\nonumber\\
&\hspace{17mm}\left.+\frac{1}{2}\langle L_{xx}(t)x_1(t),x_1(t)\rangle+\langle L_{xu}(t)x_1(t),\delta u(t)\rangle_\mathcal{H}+\frac{1}{2}\langle L_{uu}(t)\delta u(t),\delta u(t)\rangle_\mathcal{H}\right\}dt\nonumber\\
&\hspace{9mm}+\frac{1}{\varepsilon_n}\langle g_x(\bar{x}(T)), x_1(T)\rangle+\frac{1}{2}\langle g_x(\bar{x}(T)), x_2(T)\rangle+\frac{1}{2}\langle g_{xx}(\bar{x}(T))x_1(T), x_1(T)\rangle\Bigg\}+\textit{\textbf{o}}(1)\nonumber\\
=&\hspace{0.5mm}\textrm{Re}\Bigg\{-\frac{1}{2}\int_{t_0}^T\left\{\langle \mathbb{H}_{uu}(t)\delta u(t),\delta u(t)\rangle_\mathcal{H}+\langle(F_{u}(t)+\Upsilon G_{u}(t))^*P(t)(F_{u}(t)+\Upsilon G_{u}(t))\delta u(t),\delta u(t)\rangle_\mathcal{H}\right\} dt\nonumber\\
&\hspace{8mm}-\int_{t_0}^{T}\langle\mathbb{H}_{xu}(t)x_1(t),\delta u(t)\rangle_\mathcal{H} dt-\int_{t_0}^T\langle D_u(t)^*P(t)x_1(t),\delta u(t)\rangle_\mathcal{H} dt\nonumber\\
&\hspace{8mm}-\int_{t_0}^T\langle(F_{u}(t)+\Upsilon G_{u}(t))^*P(t)(F_{x}(t)+\Upsilon G_{x}(t))x_1(t),\delta u(t)\rangle_\mathcal{H} dt\nonumber\\
&\hspace{8mm}-\frac{1}{2}\int_{t_0}^T\left\{\left\langle Q^{({t_0})}(0,D_u(t)\delta u,(F_{u}(t)+\Upsilon G_{u}(t))\delta u)(t),  (F_{u}(t)+\Upsilon G_{u}(t))\delta u(t)\right\rangle\right.\nonumber\\
&\hspace{25mm}\left.+\left\langle(F_{u}(t)+\Upsilon G_{u}(t))\delta u(t), \widehat{Q}^{({t_0})}(0,D_u(t)\delta u,(F_{u}(t)+\Upsilon G_{u}(t))\delta u)(t)\right\rangle\right\} dt\Bigg\}.
\end{align*}
Letting $n\to\infty$ in the above inequality, we can obtain the desired second order necessary condition \eqref{result}. The proof completes.
\end{proof}

\end{document}